\documentclass[12pt]{amsart}

\allowdisplaybreaks[1]
\usepackage{enumerate}
\usepackage{graphicx}
\usepackage{amsmath}
\usepackage{amssymb}
\usepackage{amsfonts}
\usepackage{verbatim}
\usepackage[dvipsnames]{xcolor}
\usepackage{hyperref}
\allowdisplaybreaks
\linespread{1.12}
\makeindex

 \newtheorem{theorem}{Theorem}[section]
 \newtheorem{corollary}[theorem]{Corollary}
 \newtheorem{lemma}[theorem]{Lemma}
 \newtheorem{proposition}[theorem]{Proposition}

\newtheorem{observation}[theorem]{Observation}
\theoremstyle{definition}

\theoremstyle{remark}

\newtheorem{fact*}{Fact}


\newcommand\dd{\mathrm d}

\newcommand{\cc}[1]{\overline{#1}}

\newcommand{\tr}{\operatorname{tr}}

\newcommand{\til}{\raise.17ex\hbox{$\scriptstyle\mathtt{\sim}$}}

\newcommand\beq{\begin{equation}}

\newcommand\eeq{\end{equation}}

\newcommand\bbm{\begin{bmatrix}}
\newcommand\ebm{\end{bmatrix}}
\newcommand{\bpm}{ \begin{pmatrix}}
\newcommand{\epm}{\end{pmatrix} }
\numberwithin{equation}{section}

\newlength{\Mheight}
\newlength{\cwidth}

\newcommand{\dfn}[1]{{\bf #1}\index{#1}}

\title[Trace minmax]{Trace minmax functions and the radical Laguerre-P\'olya class}
\author[J. E. Pascoe]{
J. E. Pascoe
}
\address{Department of Mathematics\\
1400 Stadium Rd\\
  University of Florida\\
 Gainesville, FL 32611}
\email[J. E. Pascoe]{pascoej@ufl.edu}


\thanks{J. E. Pascoe is supported by NSF Analysis Grant DMS-1953963}
\date{\today}

\setcounter{tocdepth}{3}

\subjclass[2010]{Primary 46L54, 46L52 Secondary 32A70, 46E22, 11M26}


\begin{document}

\begin{abstract}
We classify functions $f:(a,b)\rightarrow \mathbb{R}$ which satisfy the inequality $$\tr f(A)+f(C)\geq  \tr f(B)+f(D)$$ when $A\leq B\leq C$ are self-adjoint matrices, $D= A+C-B$,
the so-called \emph{trace minmax functions.} (Here $A\leq B$ if $B-A$ is positive semidefinite, and $f$ is evaluated via the functional calculus.)
A function is trace minmax if and only if its derivative analytically continues to a self map of the upper half plane.
The negative exponential of a trace minmax function $g=e^{-f}$ satisfies the inequality 
	$$\det g(A) \det g(C)\leq  \det g(B) \det g(D)$$
for $A, B, C, D$ as above. We call such functions \emph{determinant isoperimetric}.
We show that determinant isoperimetric functions are in the ``radical" of the the Laguerre-P\'olya class.
We derive an integral representation for such functions which is essentially a continuous version of the Hadamard factorization for functions in the the Laguerre-P\'olya class.
We apply our results to give some equivalent formulations of the Riemann hypothesis.
\end{abstract}

\maketitle

\section{Introduction}
	Let $E\subseteq \mathbb{R}.$
	Let $f:E\rightarrow \mathbb{R}$ be a function.
	Let $X$ be a self-adjoint matrix of size $n$ with spectrum in $E.$
	We now briefly recall how to define $f(X)$ via the \dfn{matrix functional calculus}.
	Let $X$ be diagonalized a unitary matrix $U.$
	That is,
		$$X=U^*\bpm \lambda_1 & & \\ &\ddots & \\ & & \lambda_n \epm U.$$
	We define
		$$f(X)=U^*\bpm f(\lambda_1) & & \\ &\ddots & \\ & & f(\lambda_n) \epm U.$$

	Therefore, for each $n\in\mathbb{N}$, the function $f$ induces a function on $n$ by $n$ self-adjoint matrices with spectrum in $E.$
	Moreover, one can formulate familiar function theoretic notions, such as convexity and monotonicity, in this context.

	Given two self-adjoint matrices $A$ and $B$ we say $A \leq B$ if $B-A$ is positive semidefinite. (This is sometimes called the \dfn{L\"owner order}.)

	Say a function is \dfn{trace monotone} if $A\leq B$ implies $\tr f(A)\leq \tr f(B).$ If we list the eigenvalues of $A$
	as 
		$$\mu_1 \leq \mu_2 \leq \ldots \leq \mu_n,$$
	and those for $B$ as
		$$\lambda_1 \leq \lambda_2 \leq \ldots \leq \lambda_n,$$
	one can show, for example using the Weyl inequalities\cite{WeylIneq}, that $\mu_i\leq\lambda_i.$
	Noting that $\tr f(A) = \sum f(\mu_i)$ and $\tr f(B) = \sum f(\lambda_i),$ we see that \emph{$f$ is trace monotone if and only if $f$ is monotone.}

	Similarly, we say a function is \dfn{trace convex} if $\tr f\left(\frac{A+B}{2}\right)\leq \tr \frac{f(A)+f(B)}{2}.$
	As happened in the case of monotonicity, \emph{a function $f$ is trace convex if and only if $f$ is convex} \cite{gui09, klep2016}.
	In multivariable settings, the theory of joint trace convexity depends intensely on the expression being analyzed \cite{hansen,carlenbook,carlen1}.

	Say a function is \dfn{matrix monotone} if $A\leq B$ implies $f(A)\leq f(B).$
	Let $\Pi$ denote the upper half plane in $\mathbb{C}.$
	L\"owner's theorem states \cite{lo34, bha97} that a function $f:(a,b) \rightarrow \mathbb{R}$ is matrix monotone if and only if $f$ analytically continues to $\Pi$
	and $f: \Pi\cup(a,b) \rightarrow \cc{\Pi}.$
	The Nevanlinna representation \cite{nev22,lax02} then says that
 		$$f(z)= c+dz+\int \frac{1+tz}{t-z} \dd \mu(t)$$
	for some $c \in \mathbb{R}, d\in \mathbb{R}^+$ and positive Borel measure $\mu$ with support contained in $\mathbb{R}\setminus(a,b).$

	Say a function is \dfn{matrix convex} if  $f\left(\frac{A+B}{2}\right)\leq \frac{f(A)+f(B)}{2}.$
	The Kraus theorem states \cite{kraus36, bha97} that a matrix convex function $f:(a,b) \rightarrow \mathbb{R}$ analytically continues to the upper half plane
	and possesses an integral representations similar to, but not the same as, the Nevanlinna representation.

	In general, the current theory of tracial inequalities is real analytic and the theory of matrix inequalities is complex analytic.
	We give a class of trace functions that have nice complex analytic properties, which contrasts to existing literature \cite{hansen,gui09,carlenbook,carlen1,klep2016}.
	
	\subsection{Trace minmax functions}

	Say a function $f$ is \dfn{trace minmax} if $$\tr f(A)+f(C)\geq  \tr f(B)+f(D)$$ whenever $A\leq B\leq C$ are like-sized matrices with spectrum in the domain of $f$
	and $D=A+C-B.$ We use the term ``minmax" because when $A \leq C,$ we can increase $\tr f(A)+f(C)$ by increasing their difference.
	\begin{theorem}\label{mainresult}
	Let $f: (a,b)\rightarrow \mathbb{R}.$
	The following are equivalent:
		\begin{enumerate}
			\item $f$ is trace minmax,
			\item $f'$ is matrix monotone on $(a,b)$,
			\item $f$ analytically continues to the upper half plane $\Pi$ and $f':\Pi\cup(a,b) \rightarrow \cc{\Pi}.$
			\item
			For each $c \in (a,b),$ there exist unique $\alpha, \beta \in \mathbb{R}$
			and a unique finite measure $\mu$ on $[\frac{1}{a-c},\frac{1}{b-c}]$ such that
				$$f(z)=\alpha+\beta z+\int_{[\frac{1}{a-c},\frac{1}{b-c}]} \frac{-\log(1-t(z-c))-t(z-c)}{t^2} \dd \mu.$$
			Here we interpret
				$\frac{-\log(1-t(z-c))-t(z-c)}{t^2}|_{t=0} = z^2.$
				
		\end{enumerate}
	\end{theorem}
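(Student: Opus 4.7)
The plan is to establish $(2) \Leftrightarrow (3) \Leftrightarrow (4)$ by standard function theory applied to $f'$, and then link these to $(1)$ via an infinitesimal analysis based on the Daleckii--Krein formula. The equivalence $(2) \Leftrightarrow (3)$ is immediate from the L\"owner theorem recalled in the introduction, applied to $f'$ in place of $f$. For $(3) \Leftrightarrow (4)$, I would start with the Nevanlinna representation of $f'$, whose measure is supported in $\R \setminus (a,b)$ because $f'$ extends analytically across $(a,b)$:
$$f'(z) = \gamma + \delta z + \int_{\R \setminus (a,b)} \left(\frac{1}{t-z} - \frac{t}{1+t^2}\right) d\nu(t),$$
then antidifferentiate from $c$ to $z$ and apply the change of variables $s = 1/(t-c)$, which bijects $\R \setminus (a,b)$ with $[1/(a-c), 1/(b-c)]$ and transforms the $1/(t-z)$ kernel into $[-\log(1 - s(z-c)) - s(z-c)]/s^2$, with the $-t/(1+t^2)$ correction and the $\delta z$ term collected into the new linear-in-$z$ and point-mass-at-zero contributions. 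The reverse direction $(4) \Rightarrow (3)$ follows by differentiating the representation to get $f'(z) = \beta + \int (z-c)/(1-t(z-c))\, d\mu(t)$ and observing that $\IM\bigl((z-c)/(1-t(z-c))\bigr) = \IM z/|1-t(z-c)|^2 \geq 0$ on $\Pi$.

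The heart of the theorem is $(1) \Leftrightarrow (2)$. Writing $H = B - A \geq 0$ and $K = C - B \geq 0$ so that $D = A + K$, the function $\Phi(s,t) := \tr f(A + sH + tK)$ satisfies
$$\tr[f(A) + f(C) - f(B) - f(D)] = \Phi(0,0) + \Phi(1,1) - \Phi(1,0) - \Phi(0,1) = \int_0^1\!\!\int_0^1 \partial_s \partial_t \Phi(s,t)\, ds\, dt,$$
and the Daleckii--Krein formula gives $\partial_s \partial_t \Phi(s,t) = \tr\bigl(Df'(A + sH + tK)[H]\, K\bigr)$. If $f'$ is matrix monotone then $Df'(X)[H] \geq 0$ whenever $H \geq 0$, so $\tr(Df'(X)[H]\, K) \geq 0$ for $K \geq 0$ and the double integral is non-negative, giving $(2) \Rightarrow (1)$. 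For $(1) \Rightarrow (2)$, rescaling $H \mapsto \eps H$ and $K \mapsto \eta K$ and Taylor-expanding shows the trace minmax difference equals $\eps\eta\, \tr(Df'(A)[H] K) + o(\eps\eta)$, so $(1)$ forces $\tr(Df'(A)[H] K) \geq 0$ for all positive semidefinite $H, K$; by Daleckii--Krein and the Schur product theorem this is in turn equivalent to positive semidefiniteness of every L\"owner matrix $(f'[\lambda_i, \lambda_j])$ with $\lambda_i \in (a,b)$, which is L\"owner's criterion for matrix monotonicity of $f'$.

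The main obstacle will be the a priori lack of smoothness of $f$ needed to make sense of $(2)$: trace minmax only yields scalar convexity of $f$ from the $1 \times 1$ case, hence almost-everywhere differentiability but not $C^1$. My plan is to mollify $f$ by convolution with a smooth compactly supported $\phi_\eps$; trace minmax is preserved under averaging over joint translations $A, B, C, D \mapsto A + rI, \ldots$, so $f_\eps := \phi_\eps \ast f$ satisfies $(2)$, and hence $(4)$, on the corresponding shrunken interval. Passing to the limit $\eps \to 0$ in the uniform integral representation for $f_\eps$ yields analyticity of $f$ itself on $(a,b)$ together with the representation $(4)$, retroactively justifying the differentiability invoked in $(2)$.
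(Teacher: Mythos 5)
Your outline is correct and reaches the theorem, but it differs from the paper's proof at two junctures, and one step deserves a warning. For $(1)\Leftrightarrow(2)$, your backward direction (double integral of $\partial_s\partial_t \tr f(A+sH+tK)=\tr\bigl(Df'(\cdot)[H]K\bigr)\ge 0$) is essentially the paper's Lemma \ref{deriveq} plus trace duality; but for the forward direction the paper is lighter than you: it sets $C=B+tH$, takes a single difference quotient to get $\tr Df(B)[H]\ge \tr Df(A)[H]$, converts via $\tr Df(X)[H]=\tr Hf'(X)$ (your Daleckii--Krein trace identity, proved from scratch as Theorem \ref{duegar}), and reads off $f'(A)\le f'(B)$ directly --- no second-order Taylor expansion, no L\"owner matrices, no Schur product theorem. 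Your detour through L\"owner's matrix criterion is valid for the mollified (smooth) function, but you could shortcut it by simply integrating $Df'(A)[H]\ge 0$ along the segment from $A$ to $B$. For $(3)\Leftrightarrow(4)$ the paper expands $f$ in a power series at $c$ and invokes Nevanlinna's moment-problem theorem to write $na_n=\int t^{n-2}\,\dd\mu$, then resums; your plan (antidifferentiate the Nevanlinna representation of $f'$ and substitute $s=1/(t-c)$) is a legitimate alternative, but you must verify that the pushforward measure $\dd\mu=s^2\,\dd\nu$ is finite (it is, since $(t-c)^{-2}\asymp(1+t^2)^{-1}$ off $(a,b)$) and that the leftover term $(z-c)\bigl(\tfrac1{t-c}-\tfrac{t}{1+t^2}\bigr)$ is $\nu$-integrable so it can be absorbed into $\beta$; the point mass of $\mu$ at $0$ must absorb the $\delta z$ term of $f'$. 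The most delicate point is your regularization limit: the paper mollifies and passes to the limit in \emph{matrix monotonicity of $f_\eps'$} (pointwise limits of matrix monotone functions are matrix monotone), whereas you pass to the limit in the \emph{integral representation}, which needs tightness of the measures $\mu_\eps$; since $\int \dd\mu_\eps$ is morally $f_\eps''(c)$, this is not automatic if $f''$ has singular mass at $c$. It can be rescued --- the kernel satisfies $k(c+\delta,t)+k(c-\delta,t)\ge\delta^2$ on the compact parameter interval, so $\mu_\eps\bigl([\tfrac1{a-c},\tfrac1{b-c}]\bigr)\le f_\eps(c+\delta)+f_\eps(c-\delta)-2f_\eps(c)$, which is bounded as $\eps\to 0$ --- but you should supply this bound (or imitate the paper and take limits of the monotone derivatives instead), and also say a word about uniqueness of $(\alpha,\beta,\mu)$, which follows from moment determinacy of compactly supported measures.
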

	Theorem \ref{mainresult} is proven in Section \ref{MRP}.
	
	Somewhat surprisingly, trace minmax functions are also matrix convex, for the sole reason that $\log x$ is matrix concave on $(0,\infty)$ \cite{bha97}.
	\begin{corollary}\label{matcocor}
	If $f: (a,b)\rightarrow \mathbb{R}$ is trace minmax, then $f$ is matrix convex.
	\end{corollary}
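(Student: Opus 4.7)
The plan is to deduce matrix convexity directly from the integral representation in part (4) of Theorem~\ref{mainresult}. Fix any $c \in (a,b)$ and write
$$f(z)=\alpha+\beta z+\int_{[\frac{1}{a-c},\frac{1}{b-c}]} \frac{-\log(1-t(z-c))-t(z-c)}{t^2}\, \dd \mu(t).$$
The affine part $\alpha+\beta z$ is matrix affine and hence trivially matrix convex. Matrix convexity is preserved under multiplication by nonnegative scalars and under integration against positive Borel measures (one applies the desired midpoint inequality pointwise in $t$ and then integrates, using that the integral of a positive semidefinite matrix-valued function is positive semidefinite). Thus it suffices to show that for each fixed $t$ in the integration interval, the integrand is a matrix convex function of $z$ on $(a,b)$.

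For $t=0$ the integrand is $z^2$, which is matrix convex. For $t\neq 0$, split the integrand as
$$\frac{-\log(1-t(z-c))}{t^2} - \frac{z-c}{t}.$$
The second summand is affine in $z$, hence matrix affine. For the first summand, the map $z\mapsto 1-t(z-c)$ is matrix affine, and the endpoint condition $t\in [\tfrac{1}{a-c},\tfrac{1}{b-c}]$ is precisely what is needed to guarantee $1-t(z-c)>0$ for all $z\in(a,b)$: when $t>0$ we have $t(z-c)<t(b-c)\leq 1$, and when $t<0$ we have $t(z-c)<t(a-c)\leq 1$. Hence, for a self-adjoint matrix $A$ with spectrum in $(a,b)$, the matrix $I - t(A - cI)$ is positive definite.

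Now invoke the stated fact that $\log x$ is matrix concave on $(0,\infty)$ (see \cite{bha97}), so that $-\log x$ is matrix convex on $(0,\infty)$. Composing a matrix convex function with a matrix affine function (that maps spectra into the domain of matrix convexity) yields a matrix convex function; scaling by $1/t^2>0$ preserves this. Therefore $z \mapsto \tfrac{-\log(1-t(z-c))}{t^2}$ is matrix convex on $(a,b)$, and adding the matrix affine summand preserves this.

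The only genuine care required, and therefore what I would consider the main technical check, is the interchange of the midpoint inequality with the integral: one has to verify that the integrand is jointly measurable in $(t, z)$ in a manner compatible with the functional calculus, so that for $A,B$ self-adjoint with spectra in $(a,b)$ the equality
$$f\!\left(\tfrac{A+B}{2}\right) = \alpha I + \beta \tfrac{A+B}{2} + \int \frac{-\log(I-t(\tfrac{A+B}{2}-cI))-t(\tfrac{A+B}{2}-cI)}{t^2} \dd\mu(t)$$
holds as a Bochner-type integral of self-adjoint matrices, and similarly for $f(A)$ and $f(B)$. Once this is in place, applying the pointwise-in-$t$ matrix convex inequality and integrating against $\mu\geq 0$ yields $f(\tfrac{A+B}{2}) \leq \tfrac{f(A)+f(B)}{2}$, completing the proof.
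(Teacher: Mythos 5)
Your proof is correct and follows essentially the same route the paper intends: the paper justifies the corollary by combining the integral representation of Theorem~\ref{mainresult}(4) with the matrix concavity of $\log$ on $(0,\infty)$, which is exactly your decomposition into an affine part, the $t=0$ term, and matrix convex summands $-\log(1-t(z-c))/t^2$, integrated against the positive measure $\mu$.
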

	
	\subsection{The radical Laguerre-P\'olya class}
		We say $f:(a,b)\rightarrow \mathbb{R}^{\geq 0}$ is \dfn{determinant isoperimetric} whenever
			$$\det f(A) \det f(C) \leq \det f(B) \det f(D)$$
		for $A \leq B \leq C$ with spectrum in $(a,b)$ and $D=A+C-B.$
		We use the term ``isoperimetric" because when $A\leq C,$ we can increase the quantity $\det f(A) \det f(C)$ by decreasing the difference between $A$ and $C.$
		Note that $f$ is trace minmax if and only if $e^{-f}$ is determinant isoperimetric.
		Theorem \ref{mainresult} implies that the extreme rays of the cone of trace minmax functions on a neighborhood of zero
		are generated by functions of the form $-\log 1-tx,$
		$x^2,$ $\pm x$ and constants. Therefore, $1-tx,$ $e^{-x^2},$ $e^{\pm x}$ and constant functions are determinant isoperimetric.
		Thus, we obtain the following system of inequalities.
		\begin{corollary}\label{lamecor}
		Let $A, B, C \in M_n(\mathbb{C})$ such that $A \leq B \leq C.$ Let $D = A+C-B.$
		The following are true:
			\begin{enumerate}
				\item $\det e^A \det e^C = \det e^B \det e^D,$
				\item $\det e^{B^2} \det e^{D^2} \leq \det e^{A^2} \det e^{C^2},$ and thus, $$\|B\|_F+\|D\|_F\leq \|A\|_F + \|C\|_F,$$
				\item for all $t \in \left(-\frac{1}{\|A\|}, \frac{1}{\|C\|}\right),$
					$$\det 1-tA \det 1-tC \leq \det 1-tB \det 1-tD.$$
			\end{enumerate}
		\end{corollary}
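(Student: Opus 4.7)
The plan is to deduce each inequality from Theorem~\ref{mainresult} via the equivalence (noted just before the corollary) that $f$ is trace minmax if and only if $e^{-f}$ is determinant isoperimetric. For each assertion I would pick the relevant $f$ and check that $f'$ is matrix monotone (so that $f$ is trace minmax by Theorem~\ref{mainresult}); the corresponding determinant-isoperimetric inequality for $g=e^{-f}$ is then the conclusion.

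Part (1) comes from $f(x)=x$ and $f(x)=-x$, both of whose (constant) derivatives are trivially matrix monotone. The resulting trace minmax inequality and its reverse give $\tr A+\tr C=\tr B+\tr D$, and exponentiating via $\det e^X=e^{\tr X}$ yields the stated equality.

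Part (3) comes from $f(x)=-\log(1-tx)$. The bound on $t$ keeps $1-tX$ positive for $X\in\{A,B,C,D\}$, so $f$ is well-defined on their spectra, and $f'(z)=t/(1-tz)$ maps $\Pi$ into $\cc{\Pi}$ for either sign of $t$: the M\"obius factor $(1-tz)^{-1}$ lies in $\Pi$ or in $-\Pi$ depending on the sign of $t$, and multiplication by the real scalar $t$ corrects the sign. Hence $f$ is trace minmax and $e^{-f(x)}=1-tx$ is determinant isoperimetric.

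Part (2) uses $f(x)=x^2$, whose derivative $2x$ is matrix monotone, so $x^2$ is trace minmax. This gives $\tr A^2+\tr C^2\geq \tr B^2+\tr D^2$, which exponentiates to the first displayed determinant inequality and, via $\|X\|_F^2=\tr X^2$, is the squared form of the stated norm inequality. The main obstacle is passing from the squared to the un-squared norm inequality; the expected route---approximating $|x|$ by a trace-minmax family $\sqrt{x^2+\epsilon}$---breaks down, since $x/\sqrt{x^2+\epsilon}$ is \emph{not} matrix monotone on $\mathbb{R}$ (its $2\times 2$ Pick matrix at points of opposite sign is not positive semidefinite). A successful argument must therefore exploit the joint Loewner structure $A\leq B$, $D\leq C$ together with the identity $B+D=A+C$ in a more refined way, for example by building $|x|$ as a weighted limit of trace-minmax extreme rays from the integral representation in Theorem~\ref{mainresult}~(4).
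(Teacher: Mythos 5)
Your handling of items (1) and (3), and of the determinant inequality in item (2), is correct and is exactly the paper's route: the paper gives no separate proof of the corollary beyond the remark that $e^{\pm x}$, $e^{-x^2}$ and $1-tx$ are negative exponentials of the trace minmax functions $\mp x$, $x^2$, $-\log(1-t(x-c))$, which is precisely what you verify (including the sign check that $t/(1-tz)$ maps $\Pi$ to $\cc{\Pi}$ for either sign of $t$, and that the stated range of $t$ keeps the spectra of $A,B,C,D$ inside the domain of $-\log(1-tx)$).

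The point where you stall --- passing from $\|B\|_F^2+\|D\|_F^2\le\|A\|_F^2+\|C\|_F^2$ to $\|B\|_F+\|D\|_F\le\|A\|_F+\|C\|_F$ --- is not a defect of your argument: the un-squared inequality is false as stated, so no refinement of the kind you propose (building $|x|$ from the extreme rays, or exploiting the joint L\"owner structure) can succeed. Take
$$A=0,\qquad B=\diag(1,0),\qquad C=I_2,\qquad D=A+C-B=\diag(0,1).$$
Then $A\le B\le C$, yet $\|B\|_F+\|D\|_F=2>\sqrt{2}=\|A\|_F+\|C\|_F$, while the squared inequality (equivalently, the determinant inequality of item (2)) holds with equality. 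The scalar fact $|b|+|d|\le|a|+|c|$ for $a\le b\le c$, $d=a+c-b$ does not survive the passage to matrices, and of course $b^2+d^2\le a^2+c^2$ does not imply $b+d\le a+c$ for nonnegative reals. What does follow from item (2) is $\|B\|_F^2+\|D\|_F^2\le\|A\|_F^2+\|C\|_F^2$, equivalently (using $B+D=A+C$ and the parallelogram law for the Hilbert--Schmidt inner product) $\|B-D\|_F\le\|A-C\|_F$; the ``thus'' clause in the corollary should be read, or corrected, in that squared form, and you should not spend effort trying to prove it as printed.
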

		In principle, these generate (under the operations of products, $n$-th roots, and taking limits) all inequalities of the form 
			$$\prod f(\alpha_i) \prod f(\gamma_i) \leq \prod f(\beta_i) \prod f(\delta_i)$$
		where $\alpha_i, \beta_i, \gamma_i, \delta_i$ are the eigenvalues of $A, B, C, D$ respectively, where $A\leq B\leq C$ and $D=A+C-B.$
		One wonders if there is a classification of all eigenvalue inequalities satisfied by $D$ such that $D = A+C-B$ where $A \leq B \leq C$
		along the lines of Horn's conjecture \cite{HornConjecture} and the Knutson-Tao theorem \cite{knutsontao}. 

		The function $-\log x$ is trace minmax on $(0,\infty),$ and therefore $x$ is determinant isoperimetric there, yielding a
		more memorable inequality along the lines of the characteristic polynomials inequality in item 3 in Corollary \ref{lamecor}. 
		\begin{corollary}[Isoperimetric inequality]
		Let $A, B, C \in M_n(\mathbb{C})$ such that $0\leq A \leq B \leq C.$ Let $D = A+C-B.$
		Then,
			$$\det A \det C \leq \det B \det D.$$
		\end{corollary}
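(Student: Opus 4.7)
The plan is to deduce the inequality from the trace minmax property of $f(x) = -\log x$ on $(0,\infty)$, invoking the identity $\tr \log X = \log \det X$ valid for positive definite $X$. Concretely, once $-\log$ is known to be trace minmax, the defining inequality $\tr(-\log A) + \tr(-\log C) \geq \tr(-\log B) + \tr(-\log D)$ rearranges, via $\tr \log X = \log \det X$, to $\log \det B + \log \det D \geq \log \det A + \log \det C$, which is exactly the claim after exponentiating.

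Thus the real work is to verify that $-\log x$ is trace minmax on each interval $(a,b) \subset (0,\infty)$. I would use condition (3) of Theorem \ref{mainresult}: the derivative $f'(x) = -1/x$ analytically continues to $-1/z$ on $\Pi$, and for $z = x + iy$ with $y > 0$ we have $\IM(-1/z) = y/|z|^2 > 0$, so $-1/z$ sends $\Pi \cup (a,b)$ into $\cc{\Pi}$. (Equivalently, one may cite condition (2) together with the classical fact that $0 < A \leq B$ implies $B^{-1} \leq A^{-1}$.) The conclusion that $-\log$ is trace minmax on $(a,b)$ follows immediately from the theorem.

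The final step is to clear up the domain and positivity issues. From $A \leq B \leq C$ and $D = A + C - B$ one gets $A \leq D \leq C$, so $B, D \geq A \geq 0$ automatically. If $A$ or $C$ is singular, then the left side is zero while both $\det B$ and $\det D$ are nonnegative, so the inequality is trivial. Otherwise $A$ is strictly positive, which forces $B, C, D$ to be strictly positive as well, and one selects any $(a,b) \subset (0,\infty)$ containing the union of their spectra before applying the trace minmax inequality. There is essentially no substantial obstacle here; the entire content lies in recognizing $-\log$ as a trace minmax function, which was already flagged in the discussion immediately preceding the statement.
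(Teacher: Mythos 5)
Your proposal is correct and follows essentially the same route as the paper: the paper also obtains the inequality from the fact that $-\log x$ is trace minmax on $(0,\infty)$ (so that $x=e^{-(-\log x)}$ is determinant isoperimetric), which is exactly your $\tr\log X=\log\det X$ argument in different words. Your extra care with the singular case and the explicit verification that $-1/z$ is a self-map of $\Pi$ are fine additions but do not change the approach.
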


		The set of determinant isoperimetric functions is closed under multiplication and pointwise convergent limits.
		Moreover, as $1-tx,$ $e^{-x^2},$ $e^{\pm x}$ and constant functions are determinant isoperimetric, we see that
		any Hadamard product of the form
			\beq \label{hadamard} f(x) = x^ke^{-a-bx-cx^2}\prod (1-x/\rho_i)e^{x/\rho_i} \eeq
		where $b\in \mathbb{R},$ $c\geq 0$ is determinant isoperimetric on open intervals in $\mathbb{R}$ where $f$ takes nonnegative values.
		The \dfn{Laguerre-P\'olya class} is the set of entire functions which are the locally uniform limits of real-rooted polynomials.
		Laguerre-P\'olya class functions are important in various contexts, \cite{debrangesentire, debrangesjfa, szasz1943, polya,biehler}. 
		The Laguerre-P\'olya class is exactly the set of functions of the form \eqref{hadamard}.
		Define the \dfn{radical Laguerre-P\'olya class of $(a,b)$} to be the set of functions on $(a,b)$ which are the pointwise limits of real $n$-th roots
		of functions in the Laguerre-P\'olya class which are on nonnegative $(a,b).$

		Evidently, negative exponentials of trace minmax functions are exactly the radical Laguerre-P\'olya class of $(a,b)$
		as the cone of of trace minmax functions is generated by by functions of the form $-\log 1-t(x-c),$
		$x^2,$ $\pm x$ and constants and their negative exponentials are in the Laguerre-P\'olya class
		\begin{theorem}\label{LP}
			Let $f: (a,b)\rightarrow \mathbb{R}.$
			The following are equivalent:
			\begin{enumerate}
				\item $f$ is trace minmax,
				\item $e^{-f}$ is determinant isoperimetric,
				\item $e^{-f}$ is in the radical Laguerre-P\'olya class.
			\end{enumerate}
		\end{theorem}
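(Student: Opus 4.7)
The equivalence of (1) and (2) is the identity $\det e^X = e^{\tr X}$ for self-adjoint $X$, already noted in the discussion: exponentiation converts the trace minmax inequality for $f$ into the determinant isoperimetric inequality for $e^{-f}$, and taking logarithms reverses this. The remaining equivalences reduce to Theorem \ref{mainresult} together with the Hadamard factorization of Laguerre-P\'olya functions.

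For (1) $\Rightarrow$ (3), I would start from the integral representation in Theorem \ref{mainresult}(4) around some base point $c \in (a,b)$ and approximate $\mu$ weakly by atomic measures $\mu_k = \sum_i m_{k,i}\,\delta_{t_{k,i}}$ with $t_{k,i}$ rational and $m_{k,i}/t_{k,i}^2 = p_{k,i}/q_{k,i} \in \mathbb{Q}$. The kernel $t \mapsto (-\log(1-t(z-c)) - t(z-c))/t^2$ is continuous on the compact interval $[1/(a-c), 1/(b-c)]$ for each fixed $z \in (a,b)$, so weak convergence of $\mu_k$ yields pointwise convergence of the associated $f_k$ to $f$. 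Direct exponentiation gives
\[
e^{-f_k}(z) = e^{L_k(z)} \prod_i (1 - t_{k,i}(z-c))^{p_{k,i}/q_{k,i}},
\]
with an additional Gaussian factor $e^{-\lambda_k (z-c)^2/2}$ coming from any mass of $\mu_k$ at $t=0$ and $L_k$ affine. Clearing denominators via $N_k = \mathrm{lcm}_i\, q_{k,i}$ exhibits this as the positive $N_k$-th root on $(a,b)$ of a bona fide Laguerre-P\'olya function, so $e^{-f}$ is a pointwise limit of such roots and lies in the radical Laguerre-P\'olya class.

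For (3) $\Rightarrow$ (1), suppose $g_k^{1/n_k} \to e^{-f}$ pointwise on $(a,b)$ with each $g_k$ in the Laguerre-P\'olya class and nonnegative there. Since $e^{-f} > 0$, for any matrix $A$ with finite spectrum $S \subset (a,b)$ we have $g_k > 0$ on $S$ for all large $k$. Applying the Hadamard factorization \eqref{hadamard} at points where $g_k > 0$ yields
\[
-\tfrac{1}{n_k}\log g_k(x) = \tfrac{1}{n_k}\Big(-m_k \log x + a_k + b_k x + c_k x^2 + \sum_j\big[-\log(1 - x/\rho_{k,j}) - x/\rho_{k,j}\big]\Big),
\]
where $c_k \geq 0$ and the series converges thanks to $\sum_j \rho_{k,j}^{-2} < \infty$. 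Each summand is trace minmax on any open subinterval of $(a,b)$ avoiding its singularity: the $-\log(1 - x/\rho_{k,j}) - x/\rho_{k,j}$ term is an extreme generator identified by Theorem \ref{mainresult}, the polynomial terms are trace minmax, and $-\log x$ is trace minmax on $(0,\infty)$ since $-1/x$ is matrix monotone there. Because $\tr F(X) = \sum F(\lambda_i)$ depends continuously on $F$ evaluated on a fixed finite spectrum, the trace minmax inequality for $f$ follows from the analogous inequalities for $-\log g_k / n_k$ by taking $k \to \infty$.

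The main obstacle is convergence management. In the forward direction, this amounts to choosing rational-weight atomic approximations compatibly with weak-$\ast$ convergence, which is routine given continuity of the kernel on the closed interval. In the reverse direction, the subtle point is that the Hadamard sum must be justified pointwise on the spectra of the matrices under consideration, including control over possible zeros of $g_k$ in $(a,b)$; genus-one convergence of the Laguerre-P\'olya product, together with strict positivity of the limit $e^{-f}$, suffices. The substantive content is carried entirely by Theorem \ref{mainresult}.
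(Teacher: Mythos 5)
Your equivalence (1) $\Leftrightarrow$ (2) and your direction (1) $\Rightarrow$ (3) are correct and follow the paper's route: the first is just $\det e^{X}=e^{\tr X}$, and the second makes precise the paper's ``generators of the cone'' remark by discretizing the measure in Theorem \ref{mainresult}(4) with rational data and clearing denominators. Note that what this construction actually produces are Laguerre--P\'olya approximants with \emph{no zeros in} $(a,b)$: each factor $1-t(z-c)$ with $t\in[\frac{1}{a-c},\frac{1}{b-c}]$ is automatically positive on $(a,b)$.

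The gap is in (3) $\Rightarrow$ (1), at exactly the point you flag and then dismiss: zeros of $g_k$ inside $(a,b)$. Your limit argument needs the trace minmax inequality for $h_k=-\frac{1}{n_k}\log g_k$ at the \emph{fixed} matrices $A\le B\le C$, $D=A+C-B$, for all large $k$; but when $g_k$ has a root $\rho$ (necessarily of even order) strictly between eigenvalues, the corresponding Hadamard summand $-\log|x-\rho|$ is not trace minmax across $\rho$, and the inequality for $h_k$ can fail outright: with $g(x)=x^2$ and $1\times1$ matrices $A=-s$, $B=u$, $C=s$, $D=-u$, $0<u<s$, it reads $-4\log s\ge -4\log u$, which is false. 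Strict positivity of the pointwise limit does not repair this: it only forces each fixed point to escape the zero sets eventually, while the zeros may still carry nonvanishing normalized logarithmic mass inside $(a,b)$. Indeed one can take $g_k$ a square of a real-rooted polynomial whose zeros (counted with weight $1/n_k$, $n_k\to\infty$) equidistribute toward a continuous measure $\nu$ supported on a Cantor-type compact subset of $(a,b)$, with each zero placed in a gap of the support at distance at least $e^{-\sqrt{n_k}}$ from it; then $g_k^{1/n_k}$ converges at \emph{every} point of $(a,b)$ to $e^{-f}$ with $f(x)=-2\int\log|x-t|\,\dd\nu(t)$, which is finite and continuous but has a derivative that changes sign from positive to negative across the support of $\nu$, so $f$ fails even the scalar trace minmax inequality. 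So under the literal reading of the radical class (approximants merely nonnegative on $(a,b)$) your step, and in fact the statement, breaks; the paper glosses the same point (its claim that Hadamard products are determinant isoperimetric on intervals where they are nonnegative already fails for $x^2$ on an interval containing $0$). The repair is to read the definition, as the forward direction produces and the paper clearly intends, with approximants having no zeros in $(a,b)$: then every Hadamard summand is trace minmax on all of $(a,b)$, the finite sums and their pointwise limits inherit the inequality, and your argument closes. As written, ``genus-one convergence plus strict positivity of the limit suffices'' is not a proof and is not true.
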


\section{Preliminaries}
	\subsection{Derivatives in the functional calculus}
	We adopt the following notation for derivatives taken in the functional calculus,
		$$Df(X)[H] = \lim_{t\rightarrow 0} \frac{f(X+tH)-f(X)}{t},$$  $$D^2f(X)[H,K] = \lim_{t} \frac{Df(X+tK)[H]-Df(X)[H]}{t},$$
	where $X, H, K$ are like-sized self-adjoint matrices.
	
	\begin{lemma}\label{deriveq}
		If $f$ is analytic, trace minmaxity is equivalent to saying that $D^2f(X)[H,K] \geq 0$ whenever $H,K\geq 0.$
	\end{lemma}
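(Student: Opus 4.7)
The plan is to reinterpret trace minmaxity as a nonnegativity condition on the mixed second-order discrete difference of the scalar-valued analytic function $\phi(s,t) := \tr f(X+sH+tK)$, and then pass to its infinitesimal mixed partial derivative. Because $f$ is analytic, the matrix functional calculus gives
\[
\partial_s \partial_t \, \phi(s,t) = \tr D^2 f(X+sH+tK)[H,K];
\]
the lemma then says the discrete mixed difference of $\phi$ is nonnegative on every rectangle in $\{s,t\ge 0\}$ if and only if this mixed partial (equivalently, the trace of $D^2 f(X)[H,K]$) is nonnegative everywhere.

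For the forward direction, fix $X$ with spectrum in $(a,b)$ and $H,K\ge 0$. For $s,t\ge 0$ small enough that $X+sH+tK$ has spectrum in $(a,b)$, set $A=X$, $B=X+sH$, $C=X+sH+tK$, $D=X+tK$. Then $A\le B\le C$ and $D=A+C-B$, so trace minmaxity reads $\phi(0,0)+\phi(s,t)\ge \phi(s,0)+\phi(0,t)$. Dividing by $st$ and letting $s,t\to 0^+$ yields $\tr D^2 f(X)[H,K]\ge 0$.

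For the converse, given $A\le B\le C$ in the domain and $D=A+C-B$, set $X=A$, $H=B-A\ge 0$, $K=C-B\ge 0$. For $(s,t)\in [0,1]^2$ the point $X+sH+tK$ interpolates in the L\"owner order between $A$ and $C$, so its spectrum stays in $(a,b)$. Applying the fundamental theorem of calculus in each variable yields
\[
\tr[f(A)+f(C)-f(B)-f(D)]=\int_0^1\!\int_0^1 \tr D^2 f(X+sH+tK)[H,K]\,ds\,dt \ge 0.
\]

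The main technical point is the identity $\partial_s\partial_t\phi = \tr D^2 f(\cdot)[H,K]$, which follows because $\tr f(Y)$ depends real-analytically on $Y$ when $f$ is analytic, so the two partial derivatives may be computed term-by-term on a local power series expansion and freely commuted. I read the inequality ``$D^2f(X)[H,K]\ge 0$'' as a statement about its trace, since already for $f(x)=x^2$ the operator $D^2f(X)[H,K]=HK+KH$ fails to be positive semidefinite for general $H,K\ge 0$ while $f$ is certainly trace minmax.
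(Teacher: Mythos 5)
Your proof is correct and takes essentially the same route as the paper's: the forward direction via the discrete mixed second difference of $\tr f(X+sH+tK)$ with $A=X$, $B=X+sH$, $C=X+sH+tK$, $D=X+tK$, and the converse by two applications of the fundamental theorem of calculus to $D^2f$ along $[0,1]^2$. Your reading of ``$D^2f(X)[H,K]\geq 0$'' as a trace inequality is also the interpretation the paper itself relies on (its proof silently omits the trace), so that clarification is consistent with, not a departure from, the original argument.
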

	\begin{proof}
		First, suppose $f$ is trace minmax.
		Let $X$ be a self-adjoint matrix and let $H, K \geq 0.$
		Note $X \leq X+tH \leq X+tH+sK.$
		So,
			$f(X+tH+sK)+f(X)\leq f(X+sK)+f(X+tH).$
		Therefore,
			$$\frac{f(X+tH+sK)+f(X)- f(X+sK)-f(X+tH)}{st} \geq 0.$$
		Taking the limit as $t\rightarrow 0,$ we see that
			$$\frac{Df(X+sK)[H]- Df(X)[H]}{s} \geq 0.$$
		Now taking $s\rightarrow 0,$ $D^2f(X)[H,K]\geq 0.$
		
		To see the converse, let $A \leq B \leq C.$ Let $H = B-A, K=C-B.$
		Now, $Df(A+tH+sK)[H,K] \geq 0.$
		Next,
		\begin{align*}
			0&\leq \int^1_{0} Df(A+tH+sK)[H,K] \dd t\\ &=Df(B+sK)[K]-Df(A+sK)[K].
		\end{align*}
		Finally,
		\begin{align*}
		0 &\leq \int^1_{0}Df(B+sK)[K]-Df(A+sK)[K] \dd s\\ &= f(A)+f(C)-f(B)-f(A+C-B).\end{align*}
	\end{proof}

	\subsection{Nevanlinna's solutions to moment problems}
		In 1922, Nevanlinna considered the question of when a sequence $\rho_n$ is a sequence of moments for some finite positive Borel measure.
		The problem is intimately connected to the theory of self maps of the upper half plane.
		\begin{theorem}[\cite{nev22}]
			Let $\rho_n$ be a sequence of real numbers. Let $a, b>0$
			The following are equivalent:
			\begin{enumerate}\label{nevanlinna1}
				\item There exists a positive Borel measure $\mu$ on $[\frac{-1}{a},\frac{1}{b}]$ such that $\rho_n = \int t^n \dd \mu,$
				\item The moment generating function $f(z) = \sum^{\infty}_{n=0} a_nz^{n+1}$ analytically continues to $\Pi\cup (a,b)$
				and $f:\Pi \cup (a,b) \rightarrow \overline{\Pi}.$
			\end{enumerate}
		\end{theorem}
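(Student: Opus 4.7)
The plan is to prove the equivalence by direct construction in the easy direction, and to apply the classical Nevanlinna representation for self-maps of the upper half plane in the hard direction, then extract the moments via a change of variables. (I read the statement as saying the representing measure lives on $[-1/a,1/b]$ and the analytic continuation is to $\Pi\cup(-a,b)$, with $f(z)=\sum\rho_n z^{n+1}$.)

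For (1) $\Rightarrow$ (2), I would interchange summation and integration to rewrite
\[
f(z) = \sum_{n=0}^\infty z^{n+1}\int t^n\,\dd\mu(t) = \int \frac{z}{1-tz}\,\dd\mu(t).
\]
This integrand is analytic on $\Pi\cup(-a,b)$, because $t\in[-1/a,1/b]$ forces $1/t\in\R\setminus(-a,b)$, so $1-tz$ has no zero in the extended domain and dominated convergence gives analyticity. A one-line computation yields $\IM(z/(1-tz))=\IM(z)/|1-tz|^2$, which is nonnegative on $\Pi$ and vanishes on $(-a,b)$, so $f$ maps $\Pi\cup(-a,b)$ into $\cc{\Pi}$.

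For (2) $\Rightarrow$ (1), Schwarz reflection extends $f$ across $(-a,b)$, so $f$ is a self-map of $\cc{\Pi}$ on $\C\setminus((-\infty,-a]\cup[b,\infty))$ that is real on $(-a,b)$. Applying the Nevanlinna representation,
\[
f(z)=c+dz+\int_{\R}\frac{1+sz}{s-z}\,\dd\nu(s),
\]
with $c\in\R$, $d\geq 0$, and $\nu$ a finite positive Borel measure. I would use the Stieltjes inversion formula, which recovers $\nu$ as the weak limit of $\tfrac{1}{\pi}\IM f(x+\ii\ep)\,\dd x$, to conclude that $\nu$ is supported in $\R\setminus(-a,b)$ since $\IM f$ vanishes on $(-a,b)$. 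Push $\nu$ forward along $s\mapsto t:=1/s$ to obtain a finite measure $\tilde\mu$ on $[-1/a,1/b]\setminus\{0\}$, and use the identity
\[
\frac{1+sz}{s-z}=\frac{t+z}{1-tz}
\]
to rewrite the integral. The normalization $f(0)=0$ fixes $c=-\int t\,\dd\tilde\mu$, and a short manipulation collapses the representation to
\[
f(z)=dz+\int \frac{z(1+t^2)}{1-tz}\,\dd\tilde\mu(t).
\]
Expanding $z/(1-tz)=\sum_{n\geq 0}t^nz^{n+1}$ and matching coefficients with $f(z)=\sum\rho_n z^{n+1}$ gives $\rho_n=\int t^n(1+t^2)\,\dd\tilde\mu(t)$ for $n\geq 1$ and $\rho_0=d+\int(1+t^2)\,\dd\tilde\mu(t)$. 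Defining $\dd\mu:=(1+t^2)\,\dd\tilde\mu+d\,\dd\delta_0$ produces a finite positive Borel measure on $[-1/a,1/b]$ with the correct moments.

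The main obstacle is the passage from the Nevanlinna representation to the moment measure: one must argue that the representing measure has no mass on $(-a,b)$ (this is where the analytic continuation hypothesis is used, through the Stieltjes inversion formula), and then perform the reciprocation change of variables while correctly absorbing the constant $d$ into an atom at the origin. The rest of the argument is essentially bookkeeping with power series and Fubini-type interchanges.
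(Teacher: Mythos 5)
The paper does not prove this statement; it is cited from \cite{nev22}, and the only hint the paper gives about its proof is the remark following Theorem \ref{nevanlinna2} that the Hankel-matrix positivity criterion ``is used in conjunction with a GNS-type construction to prove the prior theorem.'' Your argument is correct but takes a genuinely different route. Instead of building a representing measure algebraically from positivity of the moment Hankel matrix, you pass directly through the Nevanlinna integral representation for Pick functions, use the fact that the real coefficients $\rho_n$ force $f$ to be real on $(-a,b)$ (hence Schwarz-reflectable, with Stieltjes inversion killing the mass of the representing measure on $(-a,b)$), and then reciprocate $s\mapsto 1/s$ to convert the Nevanlinna measure on $\mathbb{R}\setminus(-a,b)$ into a moment measure on $[-1/a,1/b]$, absorbing the linear-growth constant $d\ge 0$ as an atom at the origin. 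Your bookkeeping checks out: the identity $\frac{1+sz}{s-z}=\frac{t+z}{1-tz}$ under $t=1/s$, the evaluation $c=-\int t\,\dd\tilde\mu$ from $f(0)=0$, the simplification to $dz+\int\frac{z(1+t^2)}{1-tz}\,\dd\tilde\mu$, and the coefficient match $\rho_n=\int t^n(1+t^2)\,\dd\tilde\mu$ for $n\ge1$ with $\rho_0=d+\int(1+t^2)\,\dd\tilde\mu$ are all correct, and the measure $\dd\mu=(1+t^2)\,\dd\tilde\mu+d\,\delta_0$ indeed produces the stated moments. The tradeoff: the paper's implied GNS/Hankel route is more self-contained and produces the Hankel criterion as a byproduct (which the paper uses elsewhere, e.g.\ Observation \ref{hankelobs}), whereas your route quotes the Nevanlinna representation and Stieltjes inversion as black boxes but yields the localized-support statement and the explicit change of variables more transparently. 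Both are standard ways to prove the result.
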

		There is also a nice Hankel matrix type condition. (In fact, this is used in conjunction with a GNS-type construction to prove the prior theorem.)
		\begin{theorem}[\cite{nev22}] \label{nevanlinna2}
			Let $\rho_n$ be a sequence of real numbers. 
			The following are equivalent:
			\begin{enumerate}
				\item There exists a positive Borel measure $\mu$ on $\mathbb{R}$ such that $\rho_n = \int t^n \dd \mu,$
				\item The infinite Hankel matrix $$\bbm
				\rho_0 & \rho_1 & \rho_2 & \ldots\\
				\rho_1 & \rho_2 & \rho_3 & \ldots\\
				\rho_2 & \rho_3 & \rho_4 & \ldots\\
				\vdots & \vdots & \vdots & \ddots
				\ebm$$
				is positive semidefinite.
			\end{enumerate}
		\end{theorem}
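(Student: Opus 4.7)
The plan is to prove the implications (1) $\Rightarrow$ (2) and (2) $\Rightarrow$ (1) separately, with the first being essentially trivial and the second requiring a GNS-type construction together with the spectral theorem.

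For the easy direction, if $\rho_n = \int t^n \dd \mu$, then for any finitely supported complex sequence $(c_n)$,
\[
\sum_{i,j} \cc{c_i} c_j \rho_{i+j} = \int \left| \sum_i c_i t^i \right|^2 \dd \mu \geq 0,
\]
so every finite principal submatrix of the Hankel matrix is positive semidefinite.

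For the hard direction, I would first use the hypothesis to define a positive semidefinite sesquilinear form on $\mathbb{C}[x]$ by $\langle p, q \rangle = \sum_{i,j} \cc{p_i} q_j \rho_{i+j}$, where $p = \sum p_i x^i$, $q = \sum q_j x^j$; positivity of this form is precisely the Hankel matrix hypothesis. Quotienting by the null space $\mathcal{N} = \{p : \langle p, p \rangle = 0\}$ and completing yields a Hilbert space $\mathcal{H}$ in which the class $[1]$ of the constant polynomial is cyclic for the action of multiplication by $x$. That action gives a densely defined symmetric operator $M_0$ on $\mathcal{H}$. I would then extend $M_0$ to a self-adjoint operator $M$, apply the spectral theorem to write $M = \int t\, dE(t)$, and set $\mu(B) := \langle E(B) [1], [1] \rangle$. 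A direct computation gives
\[
\int t^n \dd \mu = \langle M^n [1], [1] \rangle = \langle [x^n], [1] \rangle = \rho_n,
\]
completing the argument.

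The main obstacle is the existence of the self-adjoint extension of $M_0$, since $M_0$ is generally unbounded and need not be essentially self-adjoint; indeed, non-uniqueness of such a $\mu$ (the indeterminate moment problem) corresponds precisely to non-uniqueness of self-adjoint extensions. The standard workaround is to observe that $M_0$ commutes with the antilinear involution induced by $p \mapsto \cc{p}$ on $\mathbb{C}[x]$, which forces the deficiency indices to coincide, so von Neumann's theorem yields a self-adjoint extension $M$. Everything else — cyclicity of $[1]$, the telescoping identity $\langle M^n[1],[1]\rangle = \rho_n$, and the passage from finite-rank PSD conditions to a scalar spectral measure — is routine once the extension is in hand.
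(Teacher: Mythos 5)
Your proof is correct: it is the standard operator-theoretic solution of the Hamburger moment problem (positivity of the Hankel form, GNS construction on $\mathbb{C}[x]$, equality of deficiency indices via the conjugation $p \mapsto \cc{p}$, von Neumann's extension theorem, and the spectral theorem applied at the cyclic vector $[1]$), and the key subtlety you flag — that multiplication by $x$ need not be essentially self-adjoint, with non-uniqueness of extensions mirroring indeterminacy of the moment problem — is handled properly. The paper itself gives no proof, citing the result to Nevanlinna and only remarking that it enters via a ``GNS-type construction,'' so your argument is exactly the kind of proof the paper has in mind.
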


\section{Trace duality}
	We now endeavor to show that
		$$\tr Df(X)[H]=\tr Hf'(X),$$
	which we will use later.

	For example, consider $f(x) = x^3.$
	The derivative is given by
		$$Df(X)[H] = HX^2+ XHX + X^2H.$$
	Note,
		$$\tr Df(X)[H] = \tr H 3X^2 = \tr Hf'(X).$$
	It is clear that an inductive argument would prove this for polynomials. However, for general functions, matters are a bit more delicate.
	Our approach uses algebraic manipulation in the functional calculus. It is also likely there is a somewhat involved argument using Stone-Weierstrauss.
	\begin{lemma} \label{unitaryinvar}
	Let $f:(a,b)\rightarrow \mathbb{R}$ be a function.
	Let $U$ be a unitary. Then,
		 $$f(U^*XU)=U^*f(X)U.$$
	\end{lemma}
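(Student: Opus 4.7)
The plan is to reduce the identity directly to the definition of the functional calculus, exploiting the fact that a product of unitaries is unitary. This is really an algebraic manipulation rather than an analytic argument; the only subtlety that needs flagging is the well-definedness of $f(X)$ when $X$ has repeated eigenvalues.

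First I would diagonalize $X$: write $X = V^* D V$ where $V$ is unitary and $D = \operatorname{diag}(\la_1, \ldots, \la_n)$ contains the eigenvalues of $X$. By the definition of the functional calculus recalled in the introduction, $f(X) = V^* f(D) V$, where $f(D) = \operatorname{diag}(f(\la_1), \ldots, f(\la_n))$.

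Next I would compute a diagonalization of $U^* X U$ by substituting:
$$U^* X U = U^* V^* D V U = (VU)^* D (VU).$$
Since the product $VU$ of two unitaries is unitary, this exhibits $U^* X U$ as a unitary conjugate of the same diagonal matrix $D$, with the same eigenvalues $\la_1, \ldots, \la_n$. Applying the definition of the functional calculus to the right-hand side yields
$$f(U^* X U) = (VU)^* f(D) (VU) = U^*\big(V^* f(D) V\big) U = U^* f(X) U,$$
which is the desired identity.

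The one point that requires care is that the definition of $f(X)$ as written depends on the choice of diagonalizing unitary, and hence one must check that the above does not covertly depend on the particular factorization $X = V^* D V$. This is automatic because, on each eigenspace $\ker(X - \la I)$, the matrix $f(X)$ constructed via any diagonalization acts as the scalar $f(\la)$, and the orthogonal decomposition of $\C^n$ into eigenspaces of $X$ is intrinsic to $X$. I would either state this as a brief preliminary observation, or alternatively avoid it entirely by fixing one diagonalization of $X$ at the start and transporting it to $U^* X U$ as above — since the chain $f(U^*XU) = (VU)^* f(D)(VU)$ uses the same diagonal $f(D)$ on both sides, no ambiguity ever enters the calculation.
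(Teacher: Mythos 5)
Your proof is correct and follows essentially the same route as the paper: diagonalize $X$ via a unitary $V$, observe that $VU$ diagonalizes $U^*XU$, and apply the definition of the functional calculus with the same diagonal matrix. Your added remark on well-definedness of $f(X)$ is a reasonable extra precaution but not a departure from the paper's argument.
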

	\begin{proof}
		Suppose the unitary $V$ diagonalizes $X.$
		$$f(X)=V^*\bpm f(\lambda_1) & & \\ &\ddots & \\ & & f(\lambda_n) \epm V.$$
		Now, $VU$ diagonalizes $U^*XU,$ and so
		\begin{align*}f(U^*XU)&=U^*V\bpm f(\lambda_1) & & \\ &\ddots & \\ & & f(\lambda_n) \epm VU\\
		&=U^*f(X)U\end{align*}
	\end{proof}

	\begin{lemma} \label{unitaryconj}
		Let $f:(a,b)\rightarrow \mathbb{R}$ be a function.
		Let $U$ be a unitary. Then,
			 $$Df(U^*XU)[U^*HU]=U^*Df(X)[H]U.$$
	\end{lemma}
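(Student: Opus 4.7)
The plan is to reduce the claim to Lemma \ref{unitaryinvar} by expanding the definition of $Df$ and using the fact that unitary conjugation is linear.

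First, I would start from the definition
$$Df(U^*XU)[U^*HU] = \lim_{t\to 0} \frac{f(U^*XU + tU^*HU) - f(U^*XU)}{t}.$$
The key algebraic observation is that unitary conjugation distributes over the affine combination inside the argument of $f$: namely, $U^*XU + tU^*HU = U^*(X+tH)U$. Applying Lemma \ref{unitaryinvar} to both $X+tH$ and $X$ then gives $f(U^*(X+tH)U) = U^*f(X+tH)U$ and $f(U^*XU) = U^*f(X)U$.

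Substituting these identities into the difference quotient and pulling $U^*$ and $U$ outside (which is legal since they do not depend on $t$), I obtain
$$Df(U^*XU)[U^*HU] = U^*\left(\lim_{t\to 0}\frac{f(X+tH)-f(X)}{t}\right)U = U^*Df(X)[H]U,$$
which is what is claimed. No step here should pose a real obstacle; the only thing worth noting is that the existence of the limit on the right-hand side (i.e.\ that $Df(X)[H]$ is well-defined in the first place) automatically gives existence of the limit on the left via the same equality, so the lemma also serves to propagate differentiability in the functional calculus under unitary conjugation.
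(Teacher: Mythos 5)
Your argument is correct and is essentially identical to the paper's proof: both expand the difference quotient, use $U^*XU+tU^*HU=U^*(X+tH)U$ together with Lemma \ref{unitaryinvar}, and pull the fixed unitaries outside the limit. Your closing remark that the identity also propagates existence of the derivative is a fine (implicit in the paper) observation.
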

	\begin{proof}
		Calculating using Lemma \ref{unitaryinvar}
			\begin{align*}				
			Df(U^*XU)[U^*HU] &= \lim_{t\rightarrow 0} \frac{f(U^*XU+tU^*HU)-f(U^*XU)}{t}\\
			&= \lim_{t\rightarrow 0} \frac{f(U^*(X+tH)U)-f(U^*XU)}{t}\\
			&= \lim_{t\rightarrow 0} \frac{U^*f(X+tH)U-U^*f(X)U}{t}\\
			&= \lim_{t\rightarrow 0} \frac{U^*(f(X+tH)-f(X))U}{t}\\
			&=U^*\left(\lim_{t\rightarrow 0} \frac{f(X+tH)-f(X)}{t}\right) U\\
			&= U^*Df(X)[H]U.
			\end{align*}
	\end{proof}

	\begin{theorem} \label{duegar}
		Let $f:(a,b)\rightarrow \mathbb{R}$ be a $C^1$ function.
		Then,
			 $$\tr Df(X)[H]=\tr Hf'(X).$$
	\end{theorem}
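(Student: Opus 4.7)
The plan is to exploit the unitary covariance developed in Lemmas \ref{unitaryinvar} and \ref{unitaryconj} to reduce to the case that $X$ is diagonal, and then verify the identity by polynomial approximation. Writing $X = U^*DU$ with $D = \diag(\lambda_1,\ldots,\lambda_n)$ and $U$ unitary, Lemma \ref{unitaryconj} combined with cyclicity of the trace gives $\tr Df(X)[H] = \tr Df(D)[UHU^*]$, while Lemma \ref{unitaryinvar} and cyclicity give $\tr Hf'(X) = \tr (UHU^*)f'(D)$. So it suffices to prove $\tr Df(D)[K] = \tr K f'(D)$ when $D$ is diagonal, with $K = UHU^*$.

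In this diagonal case the claim reduces to a scalar identity. Since the trace is a continuous linear functional on a finite-dimensional space, it commutes with the limit defining the directional derivative, giving
$$\tr Df(D)[K] = \left.\frac{d}{dt}\right|_{t=0} \tr f(D+tK).$$
On the other side, $f'(D)$ is the diagonal matrix $\diag(f'(\lambda_1),\ldots,f'(\lambda_n))$, so $\tr K f'(D) = \sum_i K_{ii} f'(\lambda_i)$. The identity to be proved thus becomes the scalar statement
$$\left.\frac{d}{dt}\right|_{t=0} \tr f(D+tK) = \sum_i K_{ii}f'(\lambda_i). \qquad (\star)$$

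I would first verify $(\star)$ for polynomials. For $f(x) = x^m$, the product rule gives $Df(X)[H] = \sum_{k=0}^{m-1} X^k H X^{m-1-k}$; by cyclicity each summand has trace $\tr H X^{m-1}$, and the total is $m \tr H X^{m-1} = \tr H f'(X)$. Linearity extends this to all polynomials. Then I would approximate $f$ in the $C^1$-topology by polynomials $p_n$ on a compact subinterval of $(a,b)$ that contains the eigenvalues of $D+tK$ for all sufficiently small $t$, and pass to the limit on both sides of $(\star)$. The main obstacle is justifying the limit on the left-hand side of $(\star)$: one needs the functional $f \mapsto \frac{d}{dt}\big|_{t=0}\tr f(D+tK)$ to be continuous in the $C^1$ norm. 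A clean resolution is the Daletskii-Krein-type identity $(Df(D)[K])_{ij} = f[\lambda_i,\lambda_j]K_{ij}$, which for diagonal $D$ can be verified by an entrywise perturbation computation and which immediately yields the bound $|(Df(D)[K])_{ij}| \leq \|f'\|_{C^0}|K_{ij}|$. Alternatively, Rellich's analytic perturbation theorem provides real-analytic eigenvalue branches $\mu_i(t)$ of $D+tK$ with $\mu_i(0) = \lambda_i$ and $\mu_i'(0) = K_{ii}$, after which $(\star)$ is immediate from the chain rule applied to $\tr f(D+tK) = \sum_i f(\mu_i(t))$.
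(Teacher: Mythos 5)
Your proposal is correct, but it follows a genuinely different route from the paper. The paper never reduces to diagonal $X$ by approximation or perturbation theory: it defines $g(X)$ as the unique matrix with $\tr Df(X)[H]=\tr Hg(X)$ (Riesz duality for the trace pairing), proves that $g$ is unitarily equivariant, respects direct sums, and agrees with $f'$ on scalars, and then concludes $g(X)=f'(X)$ by diagonalizing $X$ --- a purely algebraic argument with no polynomial approximation, which is exactly why the author remarks that a Stone--Weierstrass--type argument ``is also likely'' but ``somewhat involved.'' Your proof is that other argument: the same unitary-covariance lemmas reduce to diagonal $D$, and then either the divided-difference (Daletskii--Krein/L\"owner matrix) formula or Rellich's analytic perturbation theorem closes the scalar identity $(\star)$. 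What your route buys is more information (an explicit formula for $Df(D)[K]$, hence differentiability of the matrix function for $C^1$ symbols); what it costs is importing nontrivial analytic machinery that the paper's duality trick avoids. Two small caveats: in the Rellich branch of your argument, when $D$ has repeated eigenvalues the derivatives $\mu_i'(0)$ are the eigenvalues of the compression of $K$ to the corresponding eigenspace, not the diagonal entries $K_{ii}$; the identity $(\star)$ still holds because $f'$ is constant on each eigengroup and the derivatives of the branches in a group sum to $\tr(PKP)=\sum_{i\in\text{group}}K_{ii}$, but as stated your claim $\mu_i'(0)=K_{ii}$ is not literally correct. Also, the Rellich route by itself only differentiates $t\mapsto\tr f(D+tK)$, so you are implicitly assuming the matrix derivative $Df(D)[K]$ exists in order to equate it with that scalar derivative --- the same existence assumption the paper makes, and one that your Daletskii--Krein alternative does actually justify.
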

	\begin{proof}
		Because $f$ is smooth, for each self-adjoint matrix $X$ with spectrum in $(a,b)$,
		$\tr Df(X)[H]$ is linear map from $n \times n$ matrices to $n \times n$ matrices as a function of $H$
		and there is a unique quantity $g(X)$ such that $\tr f(X)[H] = \tr Hg(X).$
		We will show that:
		\begin{enumerate}
			\item $g(U^*XU)=U^*g(X)U$ for all unitaries $U,$
			\item $g(X_1\oplus X_2) = g(X_1)\oplus g(X_2),$
			\item $g(x) = f'(x)$ whenever $x$ is a real number in $(a,b)$.
		\end{enumerate}

		To see (1), note that by Lemma \ref{unitaryconj} $$Df(U^*XU)[H]=U^*Df(X)[UHU^*]U.$$
		Therefore,
			\begin{align*}				
			\tr Hg(U^*XU)	&=\tr Df(U^*XU)[H]\\
					&=\tr U^*Df(X)[UHU^*]U\\ 
					&=\tr Df(X)[UHU^*]\\ 
					&= \tr UHU^*g(X)\\
					&= \tr HU^*g(X)U\\
			\end{align*}
		So, $g(U^*XU)= U^*g(X)U.$
		
		To see (2), first 
		write
			$$H = \bbm H_{11} & H_{12} \\ H_{21} & H_{22} \ebm.$$
		Note that $f(X_1\oplus X_2) = f(X_1)\oplus f(X_2),$ therefore
			$Df(X_1\oplus X_2)[H_{11}\oplus H_{22}]= Df(X_1)[H_{11}] \oplus Df(X_2)[H_{22}].$
		Translating the relation to $g,$ one sees that $g(X_1\oplus X_2)$ is of the form:
			$$g\bpm  X_1 & \\ & X_2\epm=\bbm g(X_1) & A(X_1,X_2) \\ A(X_2,X_1) & g(X_2) \ebm$$
		for some unknown quantities $A(X_1,X_2), A(X_2,X_1).$
		Now by (1),
			\begin{align*}
				g\bpm  X_1 & \\ & X_2\epm&=g\left(\bpm 1 & \\ & -1\epm\bpm  X_1 & \\ & X_2\epm\bpm 1 & \\ & -1\epm\right)\\
							&=\bpm 1 & \\ & -1\epm g\bpm  X_1 & \\ & X_2\epm \bpm 1 & \\ & -1\epm\\
							&=\bbm g(X_1) & -A(X_1,X_2) \\ -A(X_2,X_1) & g(X_2) \ebm,			
			\end{align*}
		and therefore $A(X_1,X_2), A(X_2,X_1)$ both equal $0.$ Thus, $g(X_1\oplus X_2) = g(X_1)\oplus g(X_2).$
		
		Now to see (3), let $x$ be a real number. Note $$\tr Df(x)[h]=Df(x)[h] = hf'(x) = \tr hf'(x),$$ and therefore $g(x) = f'(x).$
		
		We now claim $f'(X) = g(X).$
		Write 
			$$X=U^*\bpm \lambda_1 & & \\ &\ddots & \\ & & \lambda_n \epm U.$$
		Now,
		\begin{align*}
			f'(X)&= U^*\bpm f'(\lambda_1) & & \\ &\ddots & \\ & & f'(\lambda_n) \epm U\\
			&= U^*\bpm g(\lambda_1) & & \\ &\ddots & \\ & & g(\lambda_n) \epm U\\
			&= U^*g\bpm \lambda_1 & & \\ &\ddots & \\ & & \lambda_n \epm U\\
			&= g\left(U^*\bpm \lambda_1 & & \\ &\ddots & \\ & & \lambda_n \epm U\right)\\
			&= g\left(X \right).\\
		\end{align*} 
	\end{proof}

\section{Derivatives of trace minmax functions are matrix monotone}
	\begin{lemma}\label{derivmonoc1}
		Let $f: (a,b)\rightarrow \mathbb{R}$ be $C^1.$
		The function $f$ is trace minmax if and only if $f'$ is matrix monotone on $(a,b).$
	\end{lemma}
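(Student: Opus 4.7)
The plan is to give a direct proof of both directions using only the trace duality identity $\tr Df(X)[H] = \tr Hf'(X)$ from Theorem \ref{duegar}, avoiding any appeal to second derivatives or analyticity.

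\textbf{Forward direction} (trace minmax implies $f'$ matrix monotone). Fix $A \leq B$ in $(a,b)$ and an arbitrary $H \geq 0$. For small $t > 0$, apply the trace minmax inequality to the chain $A \leq B \leq B + tH$, for which $D = A + tH$. This gives
\[
\tr f(B+tH) - \tr f(B) \;\geq\; \tr f(A+tH) - \tr f(A).
\]
Dividing by $t$ and sending $t \to 0^+$ yields $\tr Df(B)[H] \geq \tr Df(A)[H]$, and Theorem \ref{duegar} rewrites this as $\tr H\bigl(f'(B) - f'(A)\bigr) \geq 0$. Since $H \geq 0$ is arbitrary and $f'(B) - f'(A)$ is self-adjoint, the standard duality of the PSD cone under the trace pairing gives $f'(A) \leq f'(B)$.

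\textbf{Reverse direction} ($f'$ matrix monotone implies trace minmax). Let $A \leq B \leq C$ with $D = A + C - B$, set $\gamma(s) = A + s(B-A)$ for $s \in [0,1]$, and define
\[
\phi(s) \;=\; \tr\bigl[f(\gamma(s) + (C-B)) - f(\gamma(s))\bigr].
\]
Then $\phi(0) = \tr f(D) - \tr f(A)$ and $\phi(1) = \tr f(C) - \tr f(B)$, so the trace minmax inequality is equivalent to $\phi(1) \geq \phi(0)$. Since $f$ is $C^1$, one may differentiate under the trace and apply Theorem \ref{duegar} to obtain
\[
\phi'(s) \;=\; \tr\,(B-A)\bigl[f'(\gamma(s) + (C-B)) - f'(\gamma(s))\bigr].
\]
Because $C - B \geq 0$, matrix monotonicity of $f'$ makes the bracketed difference positive semidefinite, and $B - A \geq 0$ as well, so the trace of their product is nonnegative. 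Hence $\phi$ is nondecreasing on $[0,1]$ and $\phi(1) \geq \phi(0)$ as required.

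There is no real obstacle to overcome here; the only subtlety is choosing the right one-parameter path in the reverse direction. Parametrizing the $B$-variable along the linear interpolation $\gamma(s)$ between $A$ and $B$ reduces the two-parameter trace minmax inequality to a single application of matrix monotonicity of $f'$, which is precisely the statement that $f'(\gamma(s) + (C-B)) - f'(\gamma(s)) \geq 0$.
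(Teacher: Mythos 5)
Your forward direction coincides with the paper's own argument: specialize the trace minmax inequality to the chain $A \le B \le B+tH$, divide by $t$, let $t \to 0^+$, and combine Theorem \ref{duegar} with self-duality of the positive semidefinite cone under the trace pairing. Your reverse direction, however, is a genuinely different route. The paper does not in fact prove the converse inside Lemma \ref{derivmonoc1}; it supplies it later, in Theorem \ref{posmono}, by writing $\tr D^2f(X)[H,K] = \tr H\,Df'(X)[K] \ge 0$ and invoking the double-integration argument of Lemma \ref{deriveq}, which is stated for analytic $f$ (legitimate there because matrix monotonicity of $f'$ forces analyticity via L\"owner, but it is a heavier tool). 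You instead integrate only once, along $\gamma(s) = A + s(B-A)$, and use trace duality to get $\phi'(s) = \tr\,(B-A)\bigl[f'(\gamma(s)+(C-B)) - f'(\gamma(s))\bigr] \ge 0$ directly from matrix monotonicity of $f'$ and the fact that the trace of a product of two positive semidefinite matrices is nonnegative; since $A \le \gamma(s) \le B$ and $A \le \gamma(s)+(C-B) \le C$, all spectra stay in $(a,b)$, so the argument is legitimate. This stays entirely within the $C^1$ hypothesis of the lemma, avoids second derivatives and any appeal to analyticity or mollification, and so makes the lemma self-contained. The only step you take for granted --- that for $C^1$ functions on finite matrices $s \mapsto \tr f(\gamma(s))$ is differentiable with derivative $\tr\,(B-A)f'(\gamma(s))$ --- is exactly the level of rigor the paper itself adopts in Theorem \ref{duegar}, so your proof is correct and, for the converse, arguably cleaner than the paper's.
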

	\begin{proof}
		Let $A \leq B \leq C.$
		One can rewrite the defining inequality for trace minmaxity
			$$\tr f(A)+f(C)\geq \tr f(B)+f(A+C-B)$$
		as
			$$\tr f(C)-f(B)\geq \tr f(A+C-B)-f(A)$$
		Let $C = B+tH.$
		Now
			$$\tr f(B+tH)-f(B)\geq \tr f(A+tH)-f(A).$$
		Dividing by $t$ and taking the limit as $t \rightarrow 0$ gives
			$$\tr Df(B)[H]\geq \tr Df(A)[H].$$
		Applying trace duality established in Theorem \ref{duegar}, we see that
		$$\tr Hf'(B)\geq \tr Hf'(A).$$
		Now, $\tr H(f'(B)-f'(A)) \geq 0$ for an arbitrary positive semidefinite matrix $H$ and therefore $f'(B)-f'(A)$ is positive semidefinite.
		Therefore $f'(A)\leq f'(B)$ and so $f'$ is matrix monotone.
	\end{proof}

	\begin{theorem} \label{posmono}
		Let $f: (a,b)\rightarrow \mathbb{R}.$
		The function $f$ is trace minmax if and only if $f'$ is matrix monotone on $(a,b).$
	\end{theorem}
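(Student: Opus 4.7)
The strategy is to reduce Theorem \ref{posmono} to Lemma \ref{derivmonoc1} via mollification followed by a normal-family limit of the derivatives. The converse direction is immediate: if $f'$ is matrix monotone on $(a,b)$, then L\"owner's theorem says $f'$ extends to a Pick function, so in particular $f\in C^1(a,b)$ and Lemma \ref{derivmonoc1} applies to conclude that $f$ is trace minmax.

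For the forward direction, one first extracts regularity from the bare set-theoretic hypothesis. Applying the trace minmax inequality to $1\times1$ matrices gives
	$$f(x)+f(z)\geq f(y)+f(x+z-y)\qquad\text{for all }x\leq y\leq z\text{ in }(a,b);$$
the choice $y=(x+z)/2$ yields midpoint convexity of $f$, which via standard arguments upgrades to honest convexity, hence to continuity and local Lipschitzness of $f$ on $(a,b)$. Now fix a smooth nonnegative mollifier $\phi_\epsilon$ supported in $[-\epsilon,\epsilon]$ with $\int\phi_\epsilon=1$, and set
	$$f_\epsilon(x)=\int_{\mathbb{R}} f(x+t)\phi_\epsilon(t)\,dt.$$
Diagonalizing $X$ shows that in the functional calculus $f_\epsilon(X)=\int f(X+tI)\phi_\epsilon(t)\,dt$. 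Since the trace minmax hypothesis $A\leq B\leq C$, $D=A+C-B$ is preserved by the shift $X\mapsto X+tI$ and the trace minmax inequality is linear in $f$, it follows that $f_\epsilon$ is trace minmax on $(a+\epsilon,b-\epsilon)$. Being $C^\infty$ there, Lemma \ref{derivmonoc1} implies that $f_\epsilon'$ is matrix monotone on $(a+\epsilon,b-\epsilon)$.

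By L\"owner's theorem each $f_\epsilon'$ analytically extends to a map $\Pi\cup(a+\epsilon,b-\epsilon)\to\cc\Pi$. The Pick class is a normal family (post-composition with $z\mapsto(z-i)/(z+i)$ lands in the closed unit disk and Montel's theorem applies), and the local Lipschitz bound on $f$ forces $|f_\epsilon'|$ to be locally uniformly bounded on $(a,b)$. Pass to a subsequence $\epsilon_n\to 0$ so that $f_{\epsilon_n}'\to g$ locally uniformly on $\Pi\cup(a,b)$; the limit $g$ is again a Pick function with values in $\cc\Pi$, hence matrix monotone on $(a,b)$. Since $f_{\epsilon_n}\to f$ locally uniformly as well (by continuity of $f$), integrating the locally uniform convergence of the derivatives identifies $f(x)-f(x_0)=\int_{x_0}^x g(s)\,ds$ on $(a,b)$, so $f\in C^1$ with $f'=g$ matrix monotone.

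The main technical obstacle is verifying that mollification preserves trace minmaxity \emph{matricially}, not merely on scalars; this hinges on the functional-calculus identity $f_\epsilon(X)=\int f(X+tI)\phi_\epsilon(t)\,dt$ and the shift-invariance of the hypothesis. The auxiliary step of promoting scalar midpoint convexity to honest regularity of $f$ is the other delicate piece, but once $f$ is known to be locally Lipschitz the normal-family extraction and the fundamental theorem of calculus finish the argument routinely.
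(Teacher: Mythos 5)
Your forward direction is essentially the paper's own argument: extract convexity/continuity from the scalar case, mollify, apply Lemma \ref{derivmonoc1} to the smooth approximants, and pass to the limit. In fact you are more careful than the paper at the limiting step: the paper simply says that $f_t\to f$ and that a pointwise limit of matrix monotone functions is matrix monotone, leaving implicit the identification of $\lim f_t'$ with $f'$, whereas your normal-family extraction of a Pick limit $g$ and the fundamental-theorem-of-calculus identification $f'=g$ fills exactly that gap and shows $f\in C^1$ along the way.

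Two caveats. First, the converse: you cite the ``if'' half of Lemma \ref{derivmonoc1}, but the paper's proof of that lemma establishes only the ``only if'' half (trace minmax $\Rightarrow$ $f'$ matrix monotone); the reverse implication is proved nowhere except inside Theorem \ref{posmono} itself, so within the paper's logical structure your citation is circular. The repair is short and is what the paper does: if $f'$ is matrix monotone then by L\"owner's theorem $f$ is real analytic, trace duality (Theorem \ref{duegar}) gives $\tr D^2f(X)[H,K]=\tr H\,Df'(X)[K]\geq 0$ for $H,K\geq 0$, and Lemma \ref{deriveq} (or the double integration in its proof) yields trace minmaxity. Second, a minor point you share with the paper: ``midpoint convexity upgrades to honest convexity by standard arguments'' is false without a regularity input (Bernstein--Doetsch needs local boundedness above at a point, or measurability); the scalar inequality actually gives Wright-type convexity, which by itself admits pathological additive counterexamples, so some justification of local boundedness (or an argument exploiting the full matrix hypothesis) should be recorded before concluding continuity and local Lipschitzness.
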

	\begin{proof}
		Without loss of generality $a=-1$ and $b=1$
		First observe that as a function on $(-1,1),$ $f$ is convex, and therefore continuous.
		Fix $\varphi$ a positive smooth function such that $\int_{\mathbb{R}} \varphi = 1$ with support contained in $(-1,1).$
		Write $\varphi_t(x)= \varphi(x/t)/t.$
		Write $f_t = f * \varphi_t.$
		Note $f_t$ is trace minmax on $(-1+t,1-t).$ Therefore, by Lemma \ref{derivmonoc1}, $f_t'$ is matrix monotone on $(-1+t,1-t).$
		As $f_t \rightarrow f$ as $t\rightarrow 0$ because $f$ is continuous, and a pointwise limit of matrix monotone functions is matrix monotone, we are done.

		To see the converse,
		note that, if $f'$ is matrix monotone and $H,K$ are positive semidefinite,
		$$\tr D^2f(X)[H,K] = \tr HDf'(X)[K]\geq 0,$$
		so we are done by Lemma \ref{deriveq}.

	\end{proof}

\section{Trace minmax representation theorems}
	We now prove our representation theorem for trace minmax functions.
	\begin{proposition} \label{repthm}
		Let $f:(a,b)\rightarrow \mathbb{R}.$
		If $f$ is trace minmax then for each $c \in (a,b),$ there exists a unique measure $\alpha, \beta \in \mathbb{R}$
			and a unique finite measure $\mu$ on $[\frac{1}{a-c},\frac{1}{b-c}]$ such that
				$$f(z)=\alpha+\beta z+\int_{[\frac{1}{a-c},\frac{1}{b-c}]} \frac{-\log(1-t(z-c))-t(z-c)}{t^2} \dd \mu.$$
	\end{proposition}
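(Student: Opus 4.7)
The plan is to apply Nevanlinna's solution to the moment problem (Theorem \ref{nevanlinna1}) to $f'$, then antidifferentiate. By translation $z\mapsto z-c$, it suffices to treat the case $c=0$, so henceforth assume $0\in(a,b)$ with $a<0<b$. By Theorem \ref{posmono}, $f'$ is matrix monotone on $(a,b)$; by L\"owner's theorem, $f'$ analytically continues to a self-map $f':\Pi\cup(a,b)\to\overline\Pi$.

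Set $\beta:=f'(0)\in\mathbb{R}$ and $g(z):=f'(z)-\beta$. Subtraction of a real constant preserves the self-map property, so $g:\Pi\cup(a,b)\to\overline\Pi$, and now $g(0)=0$. Since $g$ is analytic at $0$, expand $g(z)=\sum_{n\geq 0}\rho_n z^{n+1}$. This is exactly the moment generating function appearing in Theorem \ref{nevanlinna1}, so there exists a unique finite positive Borel measure $\mu$ on $[1/a,1/b]$ with $\rho_n=\int t^n\,\dd\mu(t)$. Summing the geometric series termwise (justified by Fubini on a neighborhood of $0$ where $|tz|<1$ on the compact support of $\mu$, and then extended by analytic continuation to all of $\Pi\cup(a,b)$),
$$g(z)=\sum_n z^{n+1}\int t^n\,\dd\mu(t)=\int_{[1/a,1/b]}\frac{z}{1-tz}\,\dd\mu(t).$$

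Now antidifferentiate. A direct computation (or the Taylor expansion at $t=0$) gives
$$\int_0^z\frac{w}{1-tw}\,\dd w=\frac{-\log(1-tz)-tz}{t^{2}},$$
with the value at $t=0$ understood by continuity. Using Fubini to swap the $w$-integral with the $\mu$-integral, and setting $\alpha:=f(0)$, we obtain
$$f(z)=\alpha+\beta z+\int_{[1/a,1/b]}\frac{-\log(1-tz)-tz}{t^{2}}\,\dd\mu(t),$$
which is the claimed formula (after undoing the translation $z\mapsto z-c$).

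For uniqueness, suppose $(\alpha_i,\beta_i,\mu_i)$ for $i=1,2$ both represent $f$. Evaluating at $z=0$ (where the kernel vanishes) gives $\alpha_1=\alpha_2$; differentiating once and evaluating at $z=0$ (where the $z$-derivative of the kernel also vanishes) gives $\beta_1=\beta_2$. The remaining identity $\int\frac{z}{1-tz}\,\dd(\mu_1-\mu_2)(t)=0$ on $(a,b)$ reduces, after dividing by $z$ and expanding in Taylor series at $0$, to equality of all moments of $\mu_1$ and $\mu_2$; since both are finite Borel measures on the compact interval $[1/a,1/b]$, this forces $\mu_1=\mu_2$ by Weierstrass approximation. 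The main obstacle in the plan is verifying that the matrix monotone function $f'-f'(0)$ fits exactly the hypothesis of Theorem \ref{nevanlinna1}; once this bridge is in place (and the translation to $c=0$ is accounted for), the remaining work is the change of variables from Nevanlinna's representation of $f'$ to the ``characteristic polynomial'' kernel $\frac{z}{1-tz}$, and the antidifferentiation, both of which are routine.
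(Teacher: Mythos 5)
Your proof is correct and follows essentially the same route as the paper: reduce to $c=0$, invoke Theorem \ref{posmono} and L\"owner's theorem, apply Nevanlinna's moment theorem (Theorem \ref{nevanlinna1}) to $f'$ minus its value at $0$, and recover $f$ from the kernel $\frac{z}{1-tz}$ --- the paper performs the same step by summing the Taylor series of $f$ termwise rather than antidifferentiating, which is an equivalent computation. Your explicit uniqueness argument (evaluation at $0$, then matching moments via Weierstrass approximation on the compact support) is a reasonable supplement, as the paper leaves uniqueness implicit in the uniqueness of the Hausdorff moment problem.
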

	\begin{proof}
		Without loss of generality $c =0.$		
		Because $f$ is trace minmax, by Theorem \ref{posmono}, $f'$ is matrix monotone.
		Furthermore, by L\"owner's theorem, $f$ analytically continues to an analytic function $f:(a,b)\cup\Pi \rightarrow \cc\Pi.$
		Write $f(z) = a_nz^n.$ As $f'(z)$ is self map of the upper half plane,
		there is a measure $\mu$ supported on $[\frac{1}{a},\frac{1}{b}]$ such that $na_n = \int t^{n-2}\dd \mu$
		by Nevanlinna's solution to the Hamburger moment problem \cite{nev22}, which we gave as Theorem \ref{nevanlinna1}.
		Now,
		\begin{align*}		
		f(z) & = a_0+ a_1z+ \sum^\infty_{n=2} \frac{z^n\int t^{n-2} \dd \mu}{n} \\
		& = a_0+ a_1z+ z^2\sum^\infty_{n=0} \frac{z^n\int t^{n}\dd \mu}{n+2} \\
		& = a_0+ a_1z+ z^2\sum^\infty_{n=0}  \int\frac{ (zt)^{n}}{n+2}\dd \mu \\
		& = a_0+a_1z+z^2\int \frac{-\log(1-tz)-tz}{(zt)^2} \dd \mu\\
		&= a_0+a_1z+\int \frac{-\log(1-tz)-tz}{t^2} \dd \mu.
		\end{align*}
	\end{proof}

	A consequence of the fact that $f'(z)$ is a Pick function and Theorem \ref{nevanlinna2} is a Hankel matrix type test for trace minmaxity.
	\begin{observation} \label{hankelobs}
		Let $f(x) = \sum a_n x^n$ be a convergent series on a neighborhood of $0.$
		The function $f$ is trace minmax if and only if the Hankel matrix
			$$\bbm
				2a_2 & 3a_3 & 4a_4 & \ldots\\
				3a_3 & 4a_4 & 5a_5 & \ldots\\
				4a_4 & 5a_5 & 6a_6 & \ldots\\
				\vdots & \vdots & \vdots & \ddots
			\ebm$$
		is positive semidefinite.		
	\end{observation}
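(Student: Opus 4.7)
The plan is to chain three results already in hand into a four-term equivalence. First, by Theorem \ref{mainresult}, $f$ is trace minmax on the neighborhood of $0$ where the series converges if and only if $f'$ extends to a self-map of $\Pi$. Second, the Taylor coefficients of $f'$ will be identified (after a trivial real translation) as a moment sequence, which then passes through Nevanlinna's two criteria (Theorems \ref{nevanlinna1} and \ref{nevanlinna2}) to a Hankel positivity condition on the shifted coefficients.

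To carry this out, expand
\begin{equation*}
f'(z) - a_1 \;=\; \sum_{n=0}^{\infty} (n+2)\,a_{n+2}\,z^{n+1} \;=\; \sum_{n=0}^\infty \rho_n z^{n+1}, \qquad \rho_n := (n+2)\,a_{n+2}.
\end{equation*}
Adding the real constant $-a_1$ preserves the property of being a self-map of $\Pi$, so $f'$ is Pick if and only if the moment generating function $g(z) := \sum \rho_n z^{n+1}$ is Pick. Theorem \ref{nevanlinna1} then equates this with $\rho_n$ being the moment sequence of a positive Borel measure supported on a bounded real interval, and Theorem \ref{nevanlinna2} equates the existence of such a moment sequence (allowing measures on all of $\mathbb{R}$) with positive semidefiniteness of the infinite Hankel matrix $[\rho_{i+j}]_{i,j\geq 0} = [(i+j+2)\,a_{i+j+2}]_{i,j\geq 0}$, which is precisely the matrix in the statement. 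Reading the endpoints of this chain yields the observation.

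The one technical point that demands attention is matching up the measures produced by the two Nevanlinna theorems: Theorem \ref{nevanlinna1} yields a compactly supported measure tied to the real interval on which $f'$ is Pick, whereas Theorem \ref{nevanlinna2} yields only a positive Borel measure on $\mathbb{R}$. This is resolved by observing that convergence of the Taylor series of $f$ in a neighborhood of $0$ gives geometric control on $|\rho_n|^{1/n}$, so any positive measure on $\mathbb{R}$ solving the Hamburger moment problem with these moments is determinate and compactly supported. Hence the two representations in fact coincide, the chain of equivalences closes, and nothing beyond the already cited results is needed.
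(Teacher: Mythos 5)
Your proposal is correct and follows essentially the same route the paper intends: trace minmaxity is converted into the Pick property of $f'$, and the shifted coefficient sequence $\rho_n=(n+2)a_{n+2}$ is then run through Theorems \ref{nevanlinna1} and \ref{nevanlinna2} to obtain the Hankel criterion. Your extra care in checking that geometric growth of the moments forces any Hamburger solution to be compactly supported (and determinate) fills in a detail the paper leaves implicit, but it is the same argument in substance.
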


\section{Proof of the main result} \label{MRP}
	(1) $\Leftrightarrow$ (2) is Theorem \ref{posmono}. (2) $\Leftrightarrow$ (3) is L\"owner's theorem. (1) $\Rightarrow$ (4)
	is Proposition \ref{repthm}. (4) $\Rightarrow$ (3)
        The derivative of such an integral representation is 
			$$b+ \int_{[\frac{1}{a-c},\frac{1}{b-c}]}\frac{z}{1-tz} \dd \mu.$$
		Since each $\frac{z}{1-tz}$ takes the upper half plane to itself, so does whole formula.

\section{Examples}
	We now give some examples.
	\begin{enumerate}
		\item The function $e^z,$ real-rooted polynomials, and the Gamma function are all determinant isoperimetric by virtue of being in the
		Laguerre-P\'olya class.
		\item The function $x^t$ for $1\leq t\leq 2$ is trace minmax, because the derivative is a self-map of the upper half plane.
		\item Consider Riemann's original $\Xi$ function.
		That is, take
			$$\xi(z) = \frac{1}{2}z(z-1)\pi^{s/2}\Gamma(z/2)\zeta(z),$$
		and define
			$\Xi(z) = \xi(1/2+iz).$
		The Riemann hypothesis says that the zeros of $\Xi$ are real. Moreover, we know $\Xi(z) = \prod (1-\frac{z}{\rho_i})e^{z/\rho_i}$
		are $1/2+i\rho_i$ are the nontrivial zeros of the Riemann zeta function. Therefore, if 
		the Riemann hypothesis is true, then $\Xi$ is in the Laguerre-P\'olya class. 
		 Applying our results in tandem, we see the
		following list of equivalent statements to the Riemann hypothesis.
		\begin{proposition}
			Let $(a,b)$ be a nonempty open interval in $\mathbb{R}$ where $\Xi$ is nonvanishing.
			The following are equivalent:
			\begin{enumerate}
				\item the Riemann hypothesis is true,
				\item $\Xi$ is in the (radical) Laguerre-P\'olya class of $(a,b)$,
				\item $\log \Xi (z)$ has a branch defined on the upper half plane,
				\item $|\Xi|$ is determinant isoperimetric on $(a,b),$
				\item $-\log |\Xi (z)|$ is trace minmax on $(a,b)$,
				\item $-\log |\Xi (z)|$ is matrix convex on $(a,b)$,
				\item $-\frac{\dd}{\dd z} \log|\Xi(z)|$ is matrix monotone $(a,b)$,
				\item Let $r \in (a,b).$ If we write $-\log \Xi(z+r)= \sum a_nz^n,$ then the infinite matrix,
					$$\bbm
					2a_2 & 3a_3 & 4a_4 & \ldots\\
					3a_3 & 4a_4 & 5a_5 & \ldots\\
					4a_4 & 5a_5 & 6a_6 & \ldots\\
					\vdots & \vdots & \vdots & \ddots
					\ebm,$$
				is positive semidefinite.
				\item Let $r \in (a,b).$ If we write $-\log \Xi(z+r)= \sum a_nz^n,$ then the infinite matrix,
					$$\bbm
					a_2 & a_3 & a_4 & \ldots\\
					a_3 & a_4 & a_5 & \ldots\\
					a_4 & a_5 & a_6 & \ldots\\
					\vdots & \vdots & \vdots & \ddots
					\ebm,$$
				is positive semidefinite.
				\item Let $r \in (a,b).$ If we write $-\log \Xi(z+r)= \sum a_nz^n,$ then there exists a $k \in \mathbb{N}$ such that the infinite matrix,
					$$\bbm
					2ka_{2k} & (2k+1)a_{2k+1} & (2k+2)a_{2k+2} & \ldots\\
					(2k+1)a_{2k+1} & (2k+2)a_{2k+2} & (2k+3)a_{2k+3} & \ldots\\
					(2k+2)a_{2k+2} & (2k+3)a_{2k+3} & (2k+4)a_{2k+4} & \ldots\\
					\vdots & \vdots & \vdots & \ddots
					\ebm,$$
				is positive semidefinite.
				\item Let $r \in (a,b).$ If we write $-\log \Xi(z+r)= \sum a_nz^n,$ then there exists a $k \in \mathbb{N}$ such that the infinite matrix,
					$$\bbm
					a_{2k} & a_{2k+1} & a_{2k+2} & \ldots\\
					a_{2k+1} & a_{2k+2} & a_{2k+3} & \ldots\\
					a_{2k+2} & a_{2k+3} & a_{2k+4} & \ldots\\
					\vdots & \vdots & \vdots & \ddots
					\ebm,$$
				is positive semidefinite.
			\end{enumerate}
		\end{proposition}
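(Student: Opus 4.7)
The plan is to prove the proposition by showing that (1) implies each of the remaining conditions and, separately, that each of the remaining conditions implies (3). The equivalence (1)$\Leftrightarrow$(3) follows directly from the functional equation $\xi(s)=\xi(1-s)$, which forces $\Xi(z)=\Xi(-z)$: a branch of $\log\Xi$ exists on the simply connected region $\Pi$ if and only if $\Xi$ is nonvanishing on $\Pi$, which by evenness is equivalent to all zeros of $\Xi$ being real.

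For the forward direction from (1), the classical Hadamard factorization for $\Xi$ (which is entire of order one) places it in the Laguerre--P\'olya class, so (2) holds in both the full LP sense and the weaker radical LP sense on $(a,b)$. Setting $f(z)=-\log\Xi(z+r)$ on a neighborhood of $0$ (which agrees with $-\log|\Xi(z+r)|$ up to an additive constant because $\Xi$ has constant sign on the connected interval $(a,b)$), I would then apply the paper's main results to $f$: Theorem \ref{LP} delivers (4) and (5), Theorem \ref{mainresult} delivers (7), Corollary \ref{matcocor} delivers (6), and Observation \ref{hankelobs} delivers (8). Condition (10) is immediate from (8) by taking $k=1$, and (11) from (9) the same way. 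For (9), I would use the elementary identity $\frac{1}{n+2}=\int_0^1 s^{n+1}\,ds$: if $(n+2)a_{n+2}=\int t^n\,d\mu(t)$ is the Hamburger representation furnished by Theorem \ref{nevanlinna2}, then $a_{n+2}=\int_0^1 s\int(st)^n\,d\mu(t)\,ds$ is itself the moment sequence of the pushforward of $s\,ds\,d\mu$ under $(s,t)\mapsto st$, hence Hankel positive.

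For the reverse implications, each of (4)--(11) will be shown to force $\Xi$ to be nonvanishing on $\Pi$, and hence to give (3). The uniform mechanism is analytic continuation of $-\log|\Xi|$ (or of $-(\log|\Xi|)'$) from $(a,b)$ to $\Pi$, combined with the observation that the natural meromorphic continuation of these real-analytic functions is a branch of $-\log\Xi$ (respectively, $-\Xi'/\Xi$), which is analytic on $\Pi$ precisely when $\Xi$ has no zeros there. Condition (7) gives this analytic continuation directly via L\"owner's theorem, as a self-map of $\cc\Pi$. Condition (6) gives it via Kraus's theorem, which analytically continues any matrix convex function to $\Pi$. For the Hankel conditions (8), (9), (10), (11), Theorem \ref{nevanlinna2} produces a positive measure $\mu$ representing the matrix entries, and the Cauchy-type generating function $\int(1-zt)^{-1}\,d\mu(t)$ is analytic on $\Pi$; multiplying by the appropriate power of $z$ and appending the finitely many lower-order Taylor coefficients then shows that $-\log\Xi(z+r)$ itself analytically continues to $\Pi$.

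The main technical obstacle is the direction (6)$\Rightarrow$(1). Matrix convexity is strictly weaker than trace minmaxity in general, so Theorem \ref{mainresult} does not apply directly. The resolution is that one need not upgrade matrix convexity to trace minmaxity at all: Kraus's theorem alone provides the analytic continuation of $-\log|\Xi|$ to $\Pi$, and the evenness of $\Xi$ globalizes this local analyticity into the Riemann hypothesis. A secondary point worth verifying is that the Hankel arguments depend on a base point $r\in(a,b)$, but analytic continuation of $-\log\Xi(z+r)$ from a neighborhood of $0$ to $\Pi$ is equivalent, after translation, to analytic continuation of $-\log\Xi$ to the entire upper half plane, so no loss of generality is incurred.
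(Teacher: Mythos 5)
Your proposal is correct and uses the same essential ingredients as the paper -- the classical Hadamard-factorization equivalence of RH with $\Xi$ being Laguerre--P\'olya, Theorem \ref{LP}, Theorem \ref{mainresult}, Corollary \ref{matcocor}, Observation \ref{hankelobs}, Kraus's theorem, and Nevanlinna's moment theorems -- but it organizes the logic differently. The paper proves a web of local equivalences (in its labels: $(a)\Leftrightarrow(b)$, $(b)\Leftrightarrow(d)\Leftrightarrow(e)$ via Theorem \ref{LP}, $(e)\Leftrightarrow(g)$, $(e)\Rightarrow(f)\Rightarrow(c)$, $(e)\Leftrightarrow(h)$, and $(h)\Rightarrow(i),(j)\Rightarrow(k)\Rightarrow(c)$), whereas you run a hub-and-spoke scheme through (1) and (3), with a uniform reverse mechanism: continue $-\log|\Xi|$ or its derivative to $\Pi$, identify the continuation with a branch of $-\log\Xi$ (respectively $-\Xi'/\Xi$) by the identity theorem, and conclude nonvanishing on $\Pi$, hence RH by the symmetry of $\Xi$. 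That mechanism is sound, and for items (4), (5), (8)--(11) it is essentially what sits inside the paper's citations of Theorem \ref{LP} and of Theorems \ref{nevanlinna2} and \ref{nevanlinna1}; for (6) you land on exactly the paper's Kraus step. Your derivation of (9) from the representing measure via $\tfrac{1}{n+2}=\int_0^1 s^{n+1}\,\dd s$ is the measure-level version of the paper's argument, which instead takes the Schur (entrywise) product of the positive Hankel matrix in (8) with the Hilbert-type matrix with entries $\tfrac{1}{i+j+2}$; these are the same fact, since that matrix is the moment matrix of $s\,\dd s$ on $[0,1]$. Two small points to tighten: state explicitly how (2) re-enters the cycle (it gives (5) immediately by Theorem \ref{LP}, or (1) by the classical P\'olya equivalence), since your detailed reverse discussion only covers (4)--(11); and when passing from the Hankel conditions to analyticity on $\Pi$, note that the Hamburger measure must in fact have compact support (forced by the positive radius of convergence of $\sum a_n z^n$), which is what justifies identifying the Taylor series with the Cauchy-type integral -- the paper glosses this in the same way, so it is a shared, easily repaired elision rather than a defect of your route.
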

		\begin{proof}
			$(a) \Leftrightarrow (b)$ is classical \cite{polya2} and follows directly from the Hadamard factorization of $\Xi$.

			$(a) \Leftrightarrow (c)$ $\Xi$ is nonvanishing on the upper half plane if and only if it admits a branch of the logarithm.

			$(b) \Leftrightarrow (d) \Leftrightarrow (e)$ is Theorem \ref{LP}.

			$(e) \Leftrightarrow (g)$ is part of Theorem \ref{mainresult}.

			$(e) \Rightarrow (f)$ is Corollary \ref{matcocor}.

			$(f) \Rightarrow (c)$ is Kraus theorem \cite{kraus36}.

			$(e) \Leftrightarrow (h)$ follows from Observation \ref{hankelobs}.

			$(h) \Rightarrow (i)$
			Note that
				$$\bbm
					1/2 & 1/3 & 1/4 & \ldots\\
					1/3 & 1/4 & 1/5 & \ldots\\
					1/4 & 1/5 & 1/6 & \ldots\\
					\vdots & \vdots & \vdots & \ddots
				\ebm \geq 0,$$
			and, therefore,
				$$\bbm
					1/2 & 1/3 & 1/4 & \ldots\\
					1/3 & 1/4 & 1/5 & \ldots\\
					1/4 & 1/5 & 1/6 & \ldots\\
					\vdots & \vdots & \vdots & \ddots
				\ebm \cdot \bbm
					2a_2 & 3a_3 & 4a_4 & \ldots\\
					3a_3 & 4a_4 & 5a_5 & \ldots\\
					4a_4 & 5a_5 & 6a_6 & \ldots\\
					\vdots & \vdots & \vdots & \ddots
				\ebm = 
				\bbm
					a_2 & a_3 & a_4 & \ldots\\
					a_3 & a_4 & a_5 & \ldots\\
					a_4 & a_5 & a_6 & \ldots\\
					\vdots & \vdots & \vdots & \ddots
				\ebm \geq 0.$$
			
			$(h)\Rightarrow (j)$ is trivial.

			$(j)\Rightarrow (k)$ has essentially the same proof as $(h) \Rightarrow (i).$

			$(i)\Rightarrow (k)$ is trivial.

			$(k) \Rightarrow (c)$ follows from Theorem \ref{nevanlinna2} combined with \ref{nevanlinna1} applied to the function $\sum^{\infty}_{j=1} a_{2k+j}z^{2k+j+1}.$
		\end{proof}
		The above formulation of the the Riemann hypothesis evokes a similiarity to approaches using hyperbolicity of Jensen polynomials
		taken in \cite{polya2, ono1, ono2}, and a positivity of derivatives approach in Li's criterion \cite{Li}.
	\end{enumerate}


\bibliography{references}
\bibliographystyle{plain}

\end{document}